\documentclass[11pt,a4paper]{article}
\usepackage[utf8]{inputenc}
\usepackage{lmodern}
\usepackage[T1]{fontenc}
\usepackage[english]{babel}
\usepackage{ifpdf}
\usepackage[left=1in, right=1in, top=1in, bottom=1in]{geometry}
\usepackage[dvipsnames]{xcolor}
\usepackage[colorlinks=true,linkcolor=teal,citecolor=teal]{hyperref}

\hypersetup{
	pdftitle={},
	pdfauthor={}
} 

\usepackage{graphicx, amsmath, amsthm, amssymb, enumitem, mathrsfs} 
\usepackage{subcaption}
\usepackage{float}
\usepackage{booktabs} 

\usepackage[capitalize]{cleveref}
\usepackage[square,numbers]{natbib}
\usepackage{soul}

\usepackage{tikz}
\usepackage{tikz-3dplot} 
\usepackage{pgfplots, pgfplotstable}
\usetikzlibrary{3d} 
\usetikzlibrary{arrows.meta}
\usepackage{algorithm}
\usepackage{algpseudocode}

\newcommand{\supp}{{\mathsf{supp }\hspace{.075cm}}}

\newcommand{\prob}{{\mathscr{P}}}
\newcommand{\wass}{{\mathcal{W}}}
\newcommand{\setoftrees}{{\mathcal{T}}}
\newcommand{\stategraph}{{\mathbb{T}}}
\newcommand{\embedding}{{\mathsf{k}}}
\newcommand{\flowpoly}{{\mathcal{K}}}

\newtheorem{theorem}{Theorem}[section]

\newtheorem{remark}[theorem]{Remark}
\newtheorem{definition}[theorem]{Definition}
\newtheorem{lemma}[theorem]{Lemma}
\newtheorem{proposition}[theorem]{Proposition}

\usepackage{xcolor, soul}

\title{Optimal Transport on Graphs and Stochastically Evolving Trees}
\author{Fan Chung \and Sawyer Jack Robertson}
\date{\today}

\begin{document}
	
	\maketitle

    \begin{abstract}
        We give an effective algorithm for determining the transportation distance between two given probability density functions defined on the vertices of a graph $G=(V,E)$ by analyzing an associated \emph{polytope}. The vertices of the polytope correspond to feasible flows on spanning trees in $G$, and the $1$-skeleton of the polytope is a projection of the spanning tree state graph associated with the Glauber dynamics on $G$. The optimal value of this transportation problem, known as the $1$-Wasserstein distance, can be computed by tracing the transportation cost along the vertices of this polytope. We show that a local minimum of the transportation cost is also a global minimum, and this leads to a steepest descent algorithm for solving the transportation problem. If the probability density functions take discrete values in $\delta \mathbb{Z}$ for some $\delta>0$, then the optimal transport cost can be reached in at most $\frac{|V|-1}{\delta}$ steps. As an application, we give an efficient algorithm for computing the Ollivier--Ricci curvature of a graph.
    \end{abstract}

    \section{Introduction}
        
    Let $G=(V,E)$ be a finite, simple, and connected graph, and let $\prob(V)\subseteq\mathbb{R}^{V}$ denote the set of probability density functions on its vertex set. For fixed $\mu,\nu\in\prob(V)$, we consider the following optimization program known as the \emph{optimal transportation problem}:
        \begin{align}\label{eq:graph-transport}
            \inf_{\pi\in\Pi(\mu,\nu)} \sum_{x, y\in V} \pi(x,y)\, d(x,y).
        \end{align}
    Here, $\Pi(\mu,\nu)\subseteq\mathbb{R}^{V\times V}$ denotes the set of couplings of $\mu$ and $\nu$, and $d(\cdot,\cdot)$ is a fixed metric on $V$. We denote by $d_G(\cdot,\cdot)$ the shortest path metric on $G$, and in the case where $d=d_G$, the optimal value of the program~\cref{eq:graph-transport} is known as the \emph{$1$-Wasserstein distance} or \emph{optimal transportation distance} between $\mu$ and $\nu$ on $G$, denoted by $\wass(\mu,\nu)=\wass_G(\mu,\nu)$.
    
    The optimal transportation problem~\cref{eq:graph-transport} has been extensively studied in the literature (see, e.g.,~\cite[Ch. 6]{peyre2019computational} and references therein; see also~\cite{essid2018quadratically}) and is related to a number of well-known problems in optimization, graph theory, and computer science, including minimum cost flow programming~\cite{orlin1993faster}, shortest-path distance computation (see~\cite{dijkstra1959note} for an historical reference), effective resistance~\cite{robertson2024all}, and discrete curvatures~\cite{lin2011ricci}. Additional applications of optimal transport on graphs and its variants include the comparison and classification of structured or attributed graphs~\cite{titouan2019optimal,togninalli2019wasserstein}, as well as graph matching, network alignment, and node embedding~\cite{xu2019gromov}.

    In this paper we investigate the optimal transport problem from the perspective of \emph{evolving trees}. A \emph{spanning tree} $T\subseteq E$ is a collection of edges of $G$ that is acyclic and such that the resulting subgraph $(V,T)$ is connected (we use $T$ and the graph $(V,T)$ interchangeably). We denote by $\setoftrees=\setoftrees(G)\subseteq 2^{E}$ the set of all spanning trees of $G$. For a fixed spanning tree $T$ and probability density functions $\mu,\nu\in\prob(V)$, it is straightforward to see that $\wass_{T}(\mu,\nu)\ge \wass_{G}(\mu,\nu)$. It is known that $\wass_G(\cdot,\cdot)$ can be determined as the minimum of transportation distances restricted to spanning trees (see,~e.g.~\cite{montrucchio2022kantorovich}):
        \begin{align}\label{eq:tree-decomposition}
            \wass(\mu,\nu)=\min_{T\in\mathcal{T}(G)}\wass_T(\mu,\nu).
        \end{align}
    In this sense, the optimal transport on $G$ may be viewed as an optimization problem over the \emph{spanning trees} of $G$. For $T\in\setoftrees$ fixed, $\wass_T(\mu,\nu)$ can be computed in time linear in $|V|$ (see, e.g.,~\cref{eq:tree-flow-formula}). However, the formulation~\cref{eq:tree-decomposition} does not \emph{a priori} present a computational advantage, since by Cayley's formula, the number of labeled spanning trees of a graph can be exponential in the size of the vertex set.

    Our study begins with the initial step of equipping $\setoftrees$ with a specific graph structure and viewing the map $T\mapsto \wass_T(\mu,\nu)$ as a function defined on the nodes of an associated \emph{state graph}. We show that there exist strong geometric constraints on the landscape of this function. This leads to local steepest descent methods for solving~\cref{eq:tree-decomposition} that can be shown to converge in polynomial time for discrete measures.

    We consider the following Markov chain on $\setoftrees$ known as the \emph{Glauber dynamics}: starting from a spanning tree $T_0\in\setoftrees$, select an edge $e$ uniformly at random from $E\setminus T_0$ and add it to $T_0$. This creates a unique simple cycle from which an edge is deleted uniformly at random, thereby producing a new spanning tree that we denote by $T_1$. It can be shown that the process $(T_t)_{t\ge 0}$ is a Markov chain on $\setoftrees$ which is irreducible and reversible (see, e.g.,~\cite{alev2020improved}). In particular, the resulting transition probability matrix is symmetric, and defines an undirected graph structure on $\setoftrees$, denoted $\stategraph=\stategraph(G)$ and called the \emph{spanning tree state graph} of $G$. The nodes of $\stategraph$ are the spanning trees of $G$, and two nodes $S,T\in\setoftrees$ are linked in $\stategraph$ if they differ by a single edge exchange as described above.~\Cref{fig:flow-polytope} illustrates a small example including the base graph, the spanning tree state graph, and a linear projection of the corresponding tree polytope.

    \begin{figure}[t!]
        \centering
        \captionsetup{font=small,skip=3pt}
        \captionsetup[subfigure]{font=footnotesize,skip=2pt}
        \begin{subfigure}[t]{0.225\textwidth}
            \centering
            \resizebox{\linewidth}{!}{%
                \begin{tikzpicture}[
                    x=1cm,y=1cm,
                    line cap=round,line join=round,
                    every node/.style={font=\normalfont}
                ]
                    \coordinate (s) at (-1.35, 0.10);
                    \coordinate (a) at (-0.55, 0.80);
                    \coordinate (b) at ( 0.55, 0.80);
                    \coordinate (t) at ( 1.35, 0.10);
                    \coordinate (c) at ( 0.00,-0.08);
                    \coordinate (p) at ( 0.00,-0.88);

                    \draw[black,line width=0.8pt] (s)--(a);
                    \draw[black,line width=0.8pt] (a)--(t);
                    \draw[black,line width=0.8pt] (s)--(b);
                    \draw[black,line width=0.8pt] (b)--(t);
                    \draw[black,line width=0.8pt] (a)--(b);
                    \draw[black,line width=0.8pt] (a)--(c);
                    \draw[black,line width=0.8pt] (c)--(t);
                    \draw[black,line width=0.8pt] (c)--(p);

                    \filldraw[draw=black,fill=red!75]  (s) circle (3.0pt);
                    \filldraw[draw=black,fill=blue!75] (t) circle (3.0pt);
                    \filldraw[draw=black,fill=black] (a) circle (2.2pt);
                    \filldraw[draw=black,fill=black] (b) circle (2.2pt);
                    \filldraw[draw=black,fill=black] (c) circle (2.2pt);
                    \filldraw[draw=black,fill=black] (p) circle (2.2pt);

                    \node[red!80!black,below=2pt]  at (s) {$\mu$};
                    \node[blue!80!black,below=2pt] at (t) {$\nu$};
                    \node[black!70,above=1pt] at (a) {};
                    \node[black!70,above=1pt] at (b) {};
                    \node[black!70,below right=0pt] at (c) {};
                    \node[black!70,below=1pt] at (p) {};
                \end{tikzpicture}%
            }
            \caption{Base graph.}
        \end{subfigure}
        \hfill
        \begin{subfigure}[t]{0.45\textwidth}
            \centering
            \resizebox{\linewidth}{!}{%
                \begin{tikzpicture}[
                    x=0.9cm,y=0.9cm,
                    line cap=round,line join=round,
                    treeedge/.style={black,line width=0.6pt},
                    stateedge/.style={black!55,line width=0.16pt}
                ]
                    \coordinate (n1)  at (-3.30, 1.25);
                    \coordinate (n2)  at (-2.20, 1.25);
                    \coordinate (n3)  at (-1.10, 1.25);
                    \coordinate (n4)  at ( 0.00, 1.25);
                    \coordinate (n5)  at ( 1.10, 1.25);
                    \coordinate (n6)  at ( 2.20, 1.25);
                    \coordinate (n7)  at ( 3.30, 1.25);

                    \coordinate (n8)  at (-3.30, 0.00);
                    \coordinate (n9)  at (-2.20, 0.00);
                    \coordinate (n10) at (-1.10, 0.00);
                    \coordinate (n11) at ( 0.00, 0.00);
                    \coordinate (n12) at ( 1.10, 0.00);
                    \coordinate (n13) at ( 2.20, 0.00);
                    \coordinate (n14) at ( 3.30, 0.00);

                    \coordinate (n15) at (-3.30,-1.25);
                    \coordinate (n16) at (-2.20,-1.25);
                    \coordinate (n17) at (-1.10,-1.25);
                    \coordinate (n18) at ( 0.00,-1.25);
                    \coordinate (n19) at ( 1.10,-1.25);
                    \coordinate (n20) at ( 2.20,-1.25);
                    \coordinate (n21) at ( 3.30,-1.25);

                    \draw[stateedge]
                        (n1)--(n2) (n1)--(n3) (n1)--(n5) (n1)--(n7) (n1)--(n9) (n1)--(n14) (n1)--(n16)
                        (n2)--(n4) (n2)--(n6) (n2)--(n8) (n2)--(n9) (n2)--(n15) (n2)--(n17)
                        (n3)--(n4) (n3)--(n5) (n3)--(n7) (n3)--(n10) (n3)--(n12) (n3)--(n14)
                        (n4)--(n6) (n4)--(n8) (n4)--(n11) (n4)--(n12) (n4)--(n15)
                        (n5)--(n6) (n5)--(n10) (n5)--(n13) (n5)--(n16)
                        (n6)--(n11) (n6)--(n13) (n6)--(n17)
                        (n7)--(n8) (n7)--(n9) (n7)--(n10) (n7)--(n12) (n7)--(n14) (n7)--(n18) (n7)--(n20)
                        (n8)--(n9) (n8)--(n11) (n8)--(n12) (n8)--(n15) (n8)--(n19) (n8)--(n20)
                        (n9)--(n12) (n9)--(n13) (n9)--(n20) (n9)--(n21)
                        (n10)--(n11) (n10)--(n12) (n10)--(n13) (n10)--(n18)
                        (n11)--(n12) (n11)--(n13) (n11)--(n19)
                        (n12)--(n13) (n12)--(n20)
                        (n13)--(n21)
                        (n14)--(n15) (n14)--(n16) (n14)--(n18) (n14)--(n20)
                        (n15)--(n17) (n15)--(n19) (n15)--(n20)
                        (n16)--(n17) (n16)--(n18) (n16)--(n21)
                        (n17)--(n19) (n17)--(n21)
                        (n18)--(n19) (n18)--(n20) (n18)--(n21)
                        (n19)--(n20) (n19)--(n21)
                        (n20)--(n21);

                    \begin{scope}[shift={(n1)},scale=0.86]
                        \node[circle,draw=black,fill=white,line width=0.25pt,minimum size=7.50mm,inner sep=0pt] at (0,0) {};
                        \coordinate (s) at (-0.27,0.02); \coordinate (a) at (-0.10,0.18); \coordinate (b) at (0.12,0.18);
                        \coordinate (t) at (0.29,0.02); \coordinate (c) at (0.02,-0.02); \coordinate (p) at (0.02,-0.20);
                        \draw[treeedge] (s)--(a); \draw[treeedge] (a)--(t); \draw[treeedge] (s)--(b); \draw[treeedge] (a)--(c); \draw[treeedge] (c)--(p);
                        \fill[black] (s) circle (1.20pt); \fill[black] (a) circle (1.20pt); \fill[black] (b) circle (1.20pt); \fill[black] (t) circle (1.20pt); \fill[black] (c) circle (1.20pt); \fill[black] (p) circle (1.20pt);
                    \end{scope}

                    \begin{scope}[shift={(n2)},scale=0.86]
                        \node[circle,draw=black,fill=white,line width=0.25pt,minimum size=7.50mm,inner sep=0pt] at (0,0) {};
                        \coordinate (s) at (-0.27,0.02); \coordinate (a) at (-0.10,0.18); \coordinate (b) at (0.12,0.18);
                        \coordinate (t) at (0.29,0.02); \coordinate (c) at (0.02,-0.02); \coordinate (p) at (0.02,-0.20);
                        \draw[treeedge] (s)--(a); \draw[treeedge] (a)--(t); \draw[treeedge] (s)--(b); \draw[treeedge] (c)--(t); \draw[treeedge] (c)--(p);
                        \fill[black] (s) circle (1.20pt); \fill[black] (a) circle (1.20pt); \fill[black] (b) circle (1.20pt); \fill[black] (t) circle (1.20pt); \fill[black] (c) circle (1.20pt); \fill[black] (p) circle (1.20pt);
                    \end{scope}

                    \begin{scope}[shift={(n3)},scale=0.86]
                        \node[circle,draw=black,fill=white,line width=0.25pt,minimum size=7.50mm,inner sep=0pt] at (0,0) {};
                        \coordinate (s) at (-0.27,0.02); \coordinate (a) at (-0.10,0.18); \coordinate (b) at (0.12,0.18);
                        \coordinate (t) at (0.29,0.02); \coordinate (c) at (0.02,-0.02); \coordinate (p) at (0.02,-0.20);
                        \draw[treeedge] (s)--(a); \draw[treeedge] (a)--(t); \draw[treeedge] (b)--(t); \draw[treeedge] (a)--(c); \draw[treeedge] (c)--(p);
                        \fill[black] (s) circle (1.20pt); \fill[black] (a) circle (1.20pt); \fill[black] (b) circle (1.20pt); \fill[black] (t) circle (1.20pt); \fill[black] (c) circle (1.20pt); \fill[black] (p) circle (1.20pt);
                    \end{scope}

                    \begin{scope}[shift={(n4)},scale=0.86]
                        \node[circle,draw=black,fill=white,line width=0.25pt,minimum size=7.50mm,inner sep=0pt] at (0,0) {};
                        \coordinate (s) at (-0.27,0.02); \coordinate (a) at (-0.10,0.18); \coordinate (b) at (0.12,0.18);
                        \coordinate (t) at (0.29,0.02); \coordinate (c) at (0.02,-0.02); \coordinate (p) at (0.02,-0.20);
                        \draw[treeedge] (s)--(a); \draw[treeedge] (a)--(t); \draw[treeedge] (b)--(t); \draw[treeedge] (c)--(t); \draw[treeedge] (c)--(p);
                        \fill[black] (s) circle (1.20pt); \fill[black] (a) circle (1.20pt); \fill[black] (b) circle (1.20pt); \fill[black] (t) circle (1.20pt); \fill[black] (c) circle (1.20pt); \fill[black] (p) circle (1.20pt);
                    \end{scope}

                    \begin{scope}[shift={(n5)},scale=0.86]
                        \node[circle,draw=black,fill=white,line width=0.25pt,minimum size=7.50mm,inner sep=0pt] at (0,0) {};
                        \coordinate (s) at (-0.27,0.02); \coordinate (a) at (-0.10,0.18); \coordinate (b) at (0.12,0.18);
                        \coordinate (t) at (0.29,0.02); \coordinate (c) at (0.02,-0.02); \coordinate (p) at (0.02,-0.20);
                        \draw[treeedge] (s)--(a); \draw[treeedge] (a)--(t); \draw[treeedge] (a)--(b); \draw[treeedge] (a)--(c); \draw[treeedge] (c)--(p);
                        \fill[black] (s) circle (1.20pt); \fill[black] (a) circle (1.20pt); \fill[black] (b) circle (1.20pt); \fill[black] (t) circle (1.20pt); \fill[black] (c) circle (1.20pt); \fill[black] (p) circle (1.20pt);
                    \end{scope}

                    \begin{scope}[shift={(n6)},scale=0.86]
                        \node[circle,draw=black,fill=white,line width=0.25pt,minimum size=7.50mm,inner sep=0pt] at (0,0) {};
                        \coordinate (s) at (-0.27,0.02); \coordinate (a) at (-0.10,0.18); \coordinate (b) at (0.12,0.18);
                        \coordinate (t) at (0.29,0.02); \coordinate (c) at (0.02,-0.02); \coordinate (p) at (0.02,-0.20);
                        \draw[treeedge] (s)--(a); \draw[treeedge] (a)--(t); \draw[treeedge] (a)--(b); \draw[treeedge] (c)--(t); \draw[treeedge] (c)--(p);
                        \fill[black] (s) circle (1.20pt); \fill[black] (a) circle (1.20pt); \fill[black] (b) circle (1.20pt); \fill[black] (t) circle (1.20pt); \fill[black] (c) circle (1.20pt); \fill[black] (p) circle (1.20pt);
                    \end{scope}

                    \begin{scope}[shift={(n7)},scale=0.86]
                        \node[circle,draw=black,fill=white,line width=0.25pt,minimum size=7.50mm,inner sep=0pt] at (0,0) {};
                        \coordinate (s) at (-0.27,0.02); \coordinate (a) at (-0.10,0.18); \coordinate (b) at (0.12,0.18);
                        \coordinate (t) at (0.29,0.02); \coordinate (c) at (0.02,-0.02); \coordinate (p) at (0.02,-0.20);
                        \draw[treeedge] (s)--(a); \draw[treeedge] (s)--(b); \draw[treeedge] (b)--(t); \draw[treeedge] (a)--(c); \draw[treeedge] (c)--(p);
                        \fill[black] (s) circle (1.20pt); \fill[black] (a) circle (1.20pt); \fill[black] (b) circle (1.20pt); \fill[black] (t) circle (1.20pt); \fill[black] (c) circle (1.20pt); \fill[black] (p) circle (1.20pt);
                    \end{scope}

                    \begin{scope}[shift={(n8)},scale=0.86]
                        \node[circle,draw=black,fill=white,line width=0.25pt,minimum size=7.50mm,inner sep=0pt] at (0,0) {};
                        \coordinate (s) at (-0.27,0.02); \coordinate (a) at (-0.10,0.18); \coordinate (b) at (0.12,0.18); \coordinate (t) at (0.29,0.02); \coordinate (c) at (0.02,-0.02); \coordinate (p) at (0.02,-0.20);
                        \draw[treeedge] (s)--(a); \draw[treeedge] (s)--(b); \draw[treeedge] (b)--(t); \draw[treeedge] (c)--(t); \draw[treeedge] (c)--(p);
                        \fill[black] (s) circle (1.20pt); \fill[black] (a) circle (1.20pt); \fill[black] (b) circle (1.20pt); \fill[black] (t) circle (1.20pt); \fill[black] (c) circle (1.20pt); \fill[black] (p) circle (1.20pt);
                    \end{scope}

                    \begin{scope}[shift={(n9)},scale=0.86]
                        \node[circle,draw=black,fill=white,line width=0.25pt,minimum size=7.50mm,inner sep=0pt] at (0,0) {};
                        \coordinate (s) at (-0.27,0.02); \coordinate (a) at (-0.10,0.18); \coordinate (b) at (0.12,0.18); \coordinate (t) at (0.29,0.02); \coordinate (c) at (0.02,-0.02); \coordinate (p) at (0.02,-0.20);
                        \draw[treeedge] (s)--(a); \draw[treeedge] (s)--(b); \draw[treeedge] (a)--(c); \draw[treeedge] (c)--(t); \draw[treeedge] (c)--(p);
                        \fill[black] (s) circle (1.20pt); \fill[black] (a) circle (1.20pt); \fill[black] (b) circle (1.20pt); \fill[black] (t) circle (1.20pt); \fill[black] (c) circle (1.20pt); \fill[black] (p) circle (1.20pt);
                    \end{scope}

                    \begin{scope}[shift={(n10)},scale=0.86]
                        \node[circle,draw=black,fill=white,line width=0.25pt,minimum size=7.50mm,inner sep=0pt] at (0,0) {};
                        \coordinate (s) at (-0.27,0.02); \coordinate (a) at (-0.10,0.18); \coordinate (b) at (0.12,0.18); \coordinate (t) at (0.29,0.02); \coordinate (c) at (0.02,-0.02); \coordinate (p) at (0.02,-0.20);
                        \draw[treeedge] (s)--(a); \draw[treeedge] (b)--(t); \draw[treeedge] (a)--(b); \draw[treeedge] (a)--(c); \draw[treeedge] (c)--(p);
                        \fill[black] (s) circle (1.20pt); \fill[black] (a) circle (1.20pt); \fill[black] (b) circle (1.20pt); \fill[black] (t) circle (1.20pt); \fill[black] (c) circle (1.20pt); \fill[black] (p) circle (1.20pt);
                    \end{scope}

                    \begin{scope}[shift={(n11)},scale=0.86]
                        \node[circle,draw=black,fill=white,line width=0.25pt,minimum size=7.50mm,inner sep=0pt] at (0,0) {};
                        \coordinate (s) at (-0.27,0.02); \coordinate (a) at (-0.10,0.18); \coordinate (b) at (0.12,0.18); \coordinate (t) at (0.29,0.02); \coordinate (c) at (0.02,-0.02); \coordinate (p) at (0.02,-0.20);
                        \draw[treeedge] (s)--(a); \draw[treeedge] (b)--(t); \draw[treeedge] (a)--(b); \draw[treeedge] (c)--(t); \draw[treeedge] (c)--(p);
                        \fill[black] (s) circle (1.20pt); \fill[black] (a) circle (1.20pt); \fill[black] (b) circle (1.20pt); \fill[black] (t) circle (1.20pt); \fill[black] (c) circle (1.20pt); \fill[black] (p) circle (1.20pt);
                    \end{scope}

                    \begin{scope}[shift={(n12)},scale=0.86]
                        \node[circle,draw=black,fill=white,line width=0.25pt,minimum size=7.50mm,inner sep=0pt] at (0,0) {};
                        \coordinate (s) at (-0.27,0.02); \coordinate (a) at (-0.10,0.18); \coordinate (b) at (0.12,0.18); \coordinate (t) at (0.29,0.02); \coordinate (c) at (0.02,-0.02); \coordinate (p) at (0.02,-0.20);
                        \draw[treeedge] (s)--(a); \draw[treeedge] (b)--(t); \draw[treeedge] (a)--(c); \draw[treeedge] (c)--(t); \draw[treeedge] (c)--(p);
                        \fill[black] (s) circle (1.20pt); \fill[black] (a) circle (1.20pt); \fill[black] (b) circle (1.20pt); \fill[black] (t) circle (1.20pt); \fill[black] (c) circle (1.20pt); \fill[black] (p) circle (1.20pt);
                    \end{scope}

                    \begin{scope}[shift={(n13)},scale=0.86]
                        \node[circle,draw=black,fill=white,line width=0.25pt,minimum size=7.50mm,inner sep=0pt] at (0,0) {};
                        \coordinate (s) at (-0.27,0.02); \coordinate (a) at (-0.10,0.18); \coordinate (b) at (0.12,0.18); \coordinate (t) at (0.29,0.02); \coordinate (c) at (0.02,-0.02); \coordinate (p) at (0.02,-0.20);
                        \draw[treeedge] (s)--(a); \draw[treeedge] (a)--(b); \draw[treeedge] (a)--(c); \draw[treeedge] (c)--(t); \draw[treeedge] (c)--(p);
                        \fill[black] (s) circle (1.20pt); \fill[black] (a) circle (1.20pt); \fill[black] (b) circle (1.20pt); \fill[black] (t) circle (1.20pt); \fill[black] (c) circle (1.20pt); \fill[black] (p) circle (1.20pt);
                    \end{scope}

                    \begin{scope}[shift={(n14)},scale=0.86]
                        \node[circle,draw=black,fill=white,line width=0.25pt,minimum size=7.50mm,inner sep=0pt] at (0,0) {};
                        \coordinate (s) at (-0.27,0.02); \coordinate (a) at (-0.10,0.18); \coordinate (b) at (0.12,0.18); \coordinate (t) at (0.29,0.02); \coordinate (c) at (0.02,-0.02); \coordinate (p) at (0.02,-0.20);
                        \draw[treeedge] (a)--(t); \draw[treeedge] (s)--(b); \draw[treeedge] (b)--(t); \draw[treeedge] (a)--(c); \draw[treeedge] (c)--(p);
                        \fill[black] (s) circle (1.20pt); \fill[black] (a) circle (1.20pt); \fill[black] (b) circle (1.20pt); \fill[black] (t) circle (1.20pt); \fill[black] (c) circle (1.20pt); \fill[black] (p) circle (1.20pt);
                    \end{scope}

                    \begin{scope}[shift={(n15)},scale=0.86]
                        \node[circle,draw=black,fill=white,line width=0.25pt,minimum size=7.50mm,inner sep=0pt] at (0,0) {};
                        \coordinate (s) at (-0.27,0.02); \coordinate (a) at (-0.10,0.18); \coordinate (b) at (0.12,0.18); \coordinate (t) at (0.29,0.02); \coordinate (c) at (0.02,-0.02); \coordinate (p) at (0.02,-0.20);
                        \draw[treeedge] (a)--(t); \draw[treeedge] (s)--(b); \draw[treeedge] (b)--(t); \draw[treeedge] (c)--(t); \draw[treeedge] (c)--(p);
                        \fill[black] (s) circle (1.20pt); \fill[black] (a) circle (1.20pt); \fill[black] (b) circle (1.20pt); \fill[black] (t) circle (1.20pt); \fill[black] (c) circle (1.20pt); \fill[black] (p) circle (1.20pt);
                    \end{scope}

                    \begin{scope}[shift={(n16)},scale=0.86]
                        \node[circle,draw=black,fill=white,line width=0.25pt,minimum size=7.50mm,inner sep=0pt] at (0,0) {};
                        \coordinate (s) at (-0.27,0.02); \coordinate (a) at (-0.10,0.18); \coordinate (b) at (0.12,0.18); \coordinate (t) at (0.29,0.02); \coordinate (c) at (0.02,-0.02); \coordinate (p) at (0.02,-0.20);
                        \draw[treeedge] (a)--(t); \draw[treeedge] (s)--(b); \draw[treeedge] (a)--(b); \draw[treeedge] (a)--(c); \draw[treeedge] (c)--(p);
                        \fill[black] (s) circle (1.20pt); \fill[black] (a) circle (1.20pt); \fill[black] (b) circle (1.20pt); \fill[black] (t) circle (1.20pt); \fill[black] (c) circle (1.20pt); \fill[black] (p) circle (1.20pt);
                    \end{scope}

                    \begin{scope}[shift={(n17)},scale=0.86]
                        \node[circle,draw=black,fill=white,line width=0.25pt,minimum size=7.50mm,inner sep=0pt] at (0,0) {};
                        \coordinate (s) at (-0.27,0.02); \coordinate (a) at (-0.10,0.18); \coordinate (b) at (0.12,0.18); \coordinate (t) at (0.29,0.02); \coordinate (c) at (0.02,-0.02); \coordinate (p) at (0.02,-0.20);
                        \draw[treeedge] (a)--(t); \draw[treeedge] (s)--(b); \draw[treeedge] (a)--(b); \draw[treeedge] (c)--(t); \draw[treeedge] (c)--(p);
                        \fill[black] (s) circle (1.20pt); \fill[black] (a) circle (1.20pt); \fill[black] (b) circle (1.20pt); \fill[black] (t) circle (1.20pt); \fill[black] (c) circle (1.20pt); \fill[black] (p) circle (1.20pt);
                    \end{scope}

                    \begin{scope}[shift={(n18)},scale=0.86]
                        \node[circle,draw=black,fill=white,line width=0.25pt,minimum size=7.50mm,inner sep=0pt] at (0,0) {};
                        \coordinate (s) at (-0.27,0.02); \coordinate (a) at (-0.10,0.18); \coordinate (b) at (0.12,0.18); \coordinate (t) at (0.29,0.02); \coordinate (c) at (0.02,-0.02); \coordinate (p) at (0.02,-0.20);
                        \draw[treeedge] (s)--(b); \draw[treeedge] (b)--(t); \draw[treeedge] (a)--(b); \draw[treeedge] (a)--(c); \draw[treeedge] (c)--(p);
                        \fill[black] (s) circle (1.20pt); \fill[black] (a) circle (1.20pt); \fill[black] (b) circle (1.20pt); \fill[black] (t) circle (1.20pt); \fill[black] (c) circle (1.20pt); \fill[black] (p) circle (1.20pt);
                    \end{scope}

                    \begin{scope}[shift={(n19)},scale=0.86]
                        \node[circle,draw=black,fill=white,line width=0.25pt,minimum size=7.50mm,inner sep=0pt] at (0,0) {};
                        \coordinate (s) at (-0.27,0.02); \coordinate (a) at (-0.10,0.18); \coordinate (b) at (0.12,0.18); \coordinate (t) at (0.29,0.02); \coordinate (c) at (0.02,-0.02); \coordinate (p) at (0.02,-0.20);
                        \draw[treeedge] (s)--(b); \draw[treeedge] (b)--(t); \draw[treeedge] (a)--(b); \draw[treeedge] (c)--(t); \draw[treeedge] (c)--(p);
                        \fill[black] (s) circle (1.20pt); \fill[black] (a) circle (1.20pt); \fill[black] (b) circle (1.20pt); \fill[black] (t) circle (1.20pt); \fill[black] (c) circle (1.20pt); \fill[black] (p) circle (1.20pt);
                    \end{scope}

                    \begin{scope}[shift={(n20)},scale=0.86]
                        \node[circle,draw=black,fill=white,line width=0.25pt,minimum size=7.50mm,inner sep=0pt] at (0,0) {};
                        \coordinate (s) at (-0.27,0.02); \coordinate (a) at (-0.10,0.18); \coordinate (b) at (0.12,0.18); \coordinate (t) at (0.29,0.02); \coordinate (c) at (0.02,-0.02); \coordinate (p) at (0.02,-0.20);
                        \draw[treeedge] (s)--(b); \draw[treeedge] (b)--(t); \draw[treeedge] (a)--(c); \draw[treeedge] (c)--(t); \draw[treeedge] (c)--(p);
                        \fill[black] (s) circle (1.20pt); \fill[black] (a) circle (1.20pt); \fill[black] (b) circle (1.20pt); \fill[black] (t) circle (1.20pt); \fill[black] (c) circle (1.20pt); \fill[black] (p) circle (1.20pt);
                    \end{scope}

                    \begin{scope}[shift={(n21)},scale=0.86]
                        \node[circle,draw=black,fill=white,line width=0.25pt,minimum size=7.50mm,inner sep=0pt] at (0,0) {};
                        \coordinate (s) at (-0.27,0.02); \coordinate (a) at (-0.10,0.18); \coordinate (b) at (0.12,0.18); \coordinate (t) at (0.29,0.02); \coordinate (c) at (0.02,-0.02); \coordinate (p) at (0.02,-0.20);
                        \draw[treeedge] (s)--(b); \draw[treeedge] (a)--(b); \draw[treeedge] (a)--(c); \draw[treeedge] (c)--(t); \draw[treeedge] (c)--(p);
                        \fill[black] (s) circle (1.20pt); \fill[black] (a) circle (1.20pt); \fill[black] (b) circle (1.20pt); \fill[black] (t) circle (1.20pt); \fill[black] (c) circle (1.20pt); \fill[black] (p) circle (1.20pt);
                    \end{scope}
                \end{tikzpicture}%
            }
            \caption{Spanning tree state graph.}
        \end{subfigure}
        \hfill
        \begin{subfigure}[t]{0.25\textwidth}
            \centering
            \resizebox{\linewidth}{!}{%
                \begin{tikzpicture}[
                    scale=0.64,
                    x={(1.12cm,0.00cm)},
                    y={(0.55cm,1.00cm)},
                    z={(0.00cm,0.88cm)},
                    line cap=round,line join=round,
                    treeedge/.style={black,line width=0.60pt},
                    face/.style={draw=black!45,fill=blue!22,fill opacity=0.28,line width=0.25pt}
                ]
                    \coordinate (Pone)   at ( 0.640, 0.127,-0.976);
                    \coordinate (Ptwo)   at ( 0.021, 0.718, 0.806);
                    \coordinate (Pthree) at ( 1.267, 0.094, 0.490);
                    \coordinate (Pfour)  at (-0.795, 0.887,-0.534);
                    \coordinate (Pfive)  at ( 0.151,-1.294,-0.114);
                    \coordinate (Psix)   at (-1.284,-0.533, 0.328);

                    \filldraw[face] (Pfive)--(Pthree)--(Psix)--cycle;
                    \filldraw[face] (Pone)--(Pfour)--(Pthree)--cycle;
                    \filldraw[face] (Pone)--(Pfive)--(Pthree)--cycle;
                    \filldraw[face] (Ptwo)--(Pfour)--(Psix)--cycle;
                    \filldraw[face] (Ptwo)--(Pthree)--(Psix)--cycle;
                    \filldraw[face] (Ptwo)--(Pfour)--(Pthree)--cycle;
                    \filldraw[face] (Pone)--(Pfive)--(Psix)--cycle;
                    \filldraw[face] (Pone)--(Pfour)--(Psix)--cycle;

                    \draw[black,line width=0.48pt]
                        (Pone)--(Pthree) (Pone)--(Pfour) (Pone)--(Pfive) (Pone)--(Psix)
                        (Ptwo)--(Pthree) (Ptwo)--(Pfour) (Ptwo)--(Psix)
                        (Pthree)--(Pfour) (Pthree)--(Pfive) (Pthree)--(Psix)
                        (Pfour)--(Psix)
                        (Pfive)--(Psix);

                    \begin{scope}[shift={(Pone)},x=1cm,y=1cm]
                        \fill[black] (0,0) circle (2.8pt);
                    \end{scope}

                    \begin{scope}[shift={(Ptwo)},x=1cm,y=1cm]
                        \fill[black] (0,0) circle (2.8pt);
                    \end{scope}

                    \begin{scope}[shift={(Pthree)},x=1cm,y=1cm]
                        \fill[black] (0,0) circle (2.8pt);
                    \end{scope}

                    \begin{scope}[shift={(Pfour)},x=1cm,y=1cm]
                        \fill[black] (0,0) circle (2.8pt);
                    \end{scope}

                    \begin{scope}[shift={(Pfive)},x=1cm,y=1cm]
                        \fill[black] (0,0) circle (2.8pt);
                    \end{scope}

                    \begin{scope}[shift={(Psix)},x=1cm,y=1cm]
                        \fill[black] (0,0) circle (2.8pt);
                    \end{scope}
                \end{tikzpicture}%
            }
            \caption{Illustration of the $(\mu,\nu)$-polytope.}
        \end{subfigure}

        \caption{A six-vertex example showing the base graph, the spanning tree state graph, and an illustration of the corresponding tree polytope, as defined in~\cref{def:flow-polytope}.}
        \label{fig:flow-polytope}
    \end{figure}

    The Glauber dynamics on spanning trees (and more generally basis exchange walks on matroids) have been studied extensively in the literature in connection with the design and analysis of sampling and counting algorithms. Based on the expansion theory of Feder and Mihail for balanced matroids~\cite{feder1992balanced}, and continuing with the seminal work of Anari--Liu--Oveis Gharan--Vinzant~\cite{anari2024log}, Alev--Lau~\cite{alev2020improved}, and Cryan--Guo--Mousa~\cite{cryan2021modified}, the spectral and mixing time properties of the process $(T_t)_{t\ge 0}$ are now relatively well understood. It is known that the Glauber dynamics on spanning trees mixes in polynomial time in the size of the graph $G$.

    
    The main goal of this paper is to make precise the connections between the Glauber dynamics and optimal transport on finite graphs. We introduce a so-called \emph{$(\mu,\nu)$-tree polytope}, denoted by $\flowpoly(\mu,\nu)$. This is constructed by associating to each tree $T\in\setoftrees$ a canonical flow vector $\embedding[T]$ (see~\cref{def:canonical-embedding}), and then taking the convex hull of all such embedded flows. This polytope lets us replace questions about a discrete optimization landscape on $\stategraph$ with questions about faces, vertices, and edges of a convex polytope. Namely, we prove that the vertices of the polytope are exactly the tree-supported flows. We then describe the fibers of the embedding, and show that edges in the polytope lift to links in the state graph. 

    Below we summarize some of our main results. 

    \begin{proposition}\label{prop:intro-vertices-of-polytope}
        Let $G=(V,E)$ be a connected undirected graph and let $\mu,\nu:V\to\mathbb{R}$ be fixed probability density vectors. For each spanning tree $T\in\setoftrees$, let $\embedding[T]$ denote the canonical embedding vector of $T$ as in~\cref{def:canonical-embedding}. Then the following statements hold.
            \begin{enumerate}[label=(\roman*)]
                \item For each spanning tree $T\in\setoftrees$, $\embedding[T]$ is a vertex of $\flowpoly(\mu,\nu)$.
                \item For each vertex $K\in\flowpoly(\mu,\nu)$, there exists at least one spanning tree $T\in\setoftrees$ such that $K = \embedding[T]$. Moreover, the support of $K$, i.e., edges $\{x,y\}\in E$ such that $K(x,y)>0$, forms a forest $F$ in $G$, and the fiber $\embedding^{-1}[K]$ consists of all spanning trees $T$ of $G$ containing $F$.
            \end{enumerate}
    \end{proposition}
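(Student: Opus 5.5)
The plan is to work directly from \cref{def:canonical-embedding}. I will take $\embedding[T]$ to be the unique flow supported on the edges of $T$ with net divergence $\mu-\nu$ at every vertex, recorded on oriented edges: for each edge of $T$ exactly one orientation carries the (nonnegative) flow value and the other carries $0$. The polytope $\flowpoly(\mu,\nu)$ is the convex hull of these vectors. The basic fact I will use repeatedly is a uniqueness lemma.

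\emph{Uniqueness lemma.} If $H\subseteq E$ is acyclic, then there is at most one vector $K\ge 0$ on oriented edges such that
\begin{itemize}
\item its support lies in $H$,
\item at most one orientation of each edge is positive, and
\item its divergence is $\mu-\nu$.
\end{itemize}
To prove it, I would argue that the difference of two such vectors gives a net (signed) flow on the edges of $H$ with zero divergence everywhere. Peeling off leaves of the forest forces this net flow to vanish edge by edge. Since each vector has at most one positive orientation per edge, equal net flows give equal oriented vectors. This lemma is also what makes $\embedding[T]$ well defined, and I expect it to be the main technical point, mostly because the orientation and sign conventions have to be matched carefully to the paper's definition.

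For (i), fix $T$ and define a linear functional $w$ on oriented-edge vectors by $w(x,y)=0$ if $\{x,y\}\in T$ and $w(x,y)=1$ otherwise. Every generator is nonnegative, so $w\cdot\embedding[S]\ge 0$ for all $S\in\setoftrees$, with equality exactly when $\supp \embedding[S]\subseteq T$. In that case $\embedding[S]$ and $\embedding[T]$ both satisfy the hypotheses of the uniqueness lemma with $H=T$, so $\embedding[S]=\embedding[T]$. Hence the face of $\flowpoly(\mu,\nu)$ minimizing $w$ is the convex hull of the single point $\embedding[T]$, which is therefore a vertex.

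For (ii), every vertex of a convex hull of finitely many points is one of those points, so $K=\embedding[T_0]$ for some $T_0\in\setoftrees$. Let $F$ be the support of $K$, viewed as a set of undirected edges. Since $F\subseteq T_0$, it is a forest. For the fiber, I would prove both inclusions.
\begin{itemize}
\item If $T\supseteq F$, then $K$ is a flow supported in $T$ with at most one positive orientation per edge and divergence $\mu-\nu$. By the uniqueness lemma with $H=T$, we get $K=\embedding[T]$.
\item Conversely, if $\embedding[T]=K$, then $F=\supp K=\supp\embedding[T]\subseteq T$.
\end{itemize}
Hence $\embedding^{-1}[K]=\{T\in\setoftrees: F\subseteq T\}$. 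This set is nonempty because any forest extends to a spanning tree of the connected graph $G$.
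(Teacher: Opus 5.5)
Your proof is correct. Part (ii) is essentially the paper's argument: vertices of a convex hull are generators, $F\subseteq T_0$ is a forest, and both fiber inclusions follow from uniqueness of the flow on a tree.

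Part (i) takes a different route. The paper argues via extreme points. If $\embedding[T]=\tfrac12(K_1+K_2)$ with $K_1,K_2\in\flowpoly(\mu,\nu)$, nonnegativity forces $K_1,K_2$ to vanish off the \emph{oriented} support $\overrightarrow{T}$ of $\embedding[T]$. Injectivity of $B$ on $\mathrm{span}\{\mathbf{1}_{(x,y)}:(x,y)\in\overrightarrow{T}\}$, proved by leaf-peeling, then gives $K_1=K_2$. You instead exhibit an exposing functional $w$ and check that $\embedding[T]$ is the only generator on the face $\{w=0\}$.

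Because $w$ controls only the \emph{undirected} support, your uniqueness lemma does need the hypothesis of at most one positive orientation per edge. You are also right to apply it only to generators. Arbitrary points of $\flowpoly(\mu,\nu)$ are convex combinations and may carry flow in both directions on an edge, so the lemma would not apply to them. The paper avoids this issue by working with the oriented support from the outset.

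What your route buys:
\begin{itemize}
\item a single lemma that handles (i) and the fiber description in (ii) at once;
\item an explicit supporting hyperplane that depends only on $T$, not on $(\mu,\nu)$, exhibiting $\embedding[T]$ directly as a $0$-dimensional face.
\end{itemize}
The paper's route needs only the definition of an extreme point. It also gives the slightly stronger fact that $\embedding[T]$ is the only feasible flow supported in $\overrightarrow{T}$.
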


    \Cref{prop:intro-vertices-of-polytope} is proved in~\Cref{sec:flow-polytope}.

    \begin{proposition}\label{prop:intro-connected-fibers}
        Let $G=(V,E)$ be a connected graph and let $F\subseteq E$ be a forest in $G$. Let
            \begin{align*}
                \setoftrees[F] &= \{T\in\setoftrees : T\supseteq F\}.
            \end{align*}
        Then the induced subgraph $\stategraph[\setoftrees[F]]$ of the spanning tree state graph $\stategraph$ is connected.
    \end{proposition}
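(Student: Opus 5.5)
Our plan is to reduce the statement to the connectivity of the spanning tree state graph of a contracted multigraph, or alternatively to argue directly by a distance-decreasing exchange argument. We take the direct route. First, $\setoftrees[F]$ is nonempty: since $F$ is acyclic and $G$ is connected, $F$ extends to a spanning tree by greedily adding edges of $E\setminus F$ that do not create cycles. Fix two trees $S,T\in\setoftrees[F]$. We show by induction on $|S\setminus T|$ that there is a path from $S$ to $T$ in $\stategraph$ whose intermediate trees all lie in $\setoftrees[F]$. If $|S\setminus T|=0$, then $S=T$ and there is nothing to prove.

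For the inductive step, suppose $S\neq T$. Choose an edge $e\in T\setminus S$ and add it to $S$. This creates a unique simple cycle $C\subseteq S\cup\{e\}$. The cycle $C$ cannot be contained in $T$, because $T$ is acyclic; hence $C$ contains some edge $f\in S\setminus T$, and $f\neq e$ since $e\in T$. Set $S'=S\cup\{e\}\setminus\{f\}$. This is a spanning tree, and it is adjacent to $S$ in $\stategraph$, since it arises from a single Glauber exchange (add $e$, delete $f$ from the created cycle).

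The key observation is that $f\notin F$. Indeed, $F\subseteq T$ while $f\notin T$. Therefore $S'\supseteq (S\setminus\{f\})\supseteq F$, so $S'\in\setoftrees[F]$. Moreover $|S'\setminus T|=|S\setminus T|-1$, because we added an edge of $T$ and removed an edge not in $T$. The induction hypothesis then yields a path from $S'$ to $T$ inside $\stategraph[\setoftrees[F]]$. Prepending the link $S\sim S'$ gives the desired path from $S$ to $T$.

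The only step needing care is the existence of an exchange edge $f$ on the cycle that lies outside $T$ (hence outside $F$). This is exactly the symmetric basis exchange property for graphic matroids, which the acyclicity of $T$ supplies directly. One must also check that the adjacency relation in $\stategraph$ as defined (symmetric transitions of the Glauber dynamics) includes every such single exchange $S\to S\cup\{e\}\setminus\{f\}$ with $f$ on the fundamental cycle of $e$. This holds because the transition probability is positive precisely for such exchanges. I expect no real obstacle beyond this bookkeeping; the argument also shows that the induced subgraph has diameter at most the maximal number of edges in which two trees of $\setoftrees[F]$ differ, namely $|V|-1-|F|$.
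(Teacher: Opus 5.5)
Your proof is correct and follows essentially the same route as the paper: induct on how many edges the current tree differs from the target, using Glauber moves that delete an edge outside the target tree (hence outside $F$) and add an edge of the target, so the overlap grows by one each step. The only difference is the order of the exchange: the paper deletes first (removing $e\in T\setminus T'$ and reconnecting the resulting cut with an edge of $T'$), while you add first (inserting $e\in T\setminus S$ and deleting an edge of the fundamental cycle that lies outside $T$).
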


    \Cref{prop:intro-connected-fibers} is proved in~\Cref{sec:connectivity-minimizers}.

    \begin{theorem}\label{thm:intro-connectivity}
        Let $G=(V,E)$ be a connected graph. For fixed probability density vectors $\mu,\nu:V\to\mathbb{R}$, let
            \begin{align*}
                X := \{T\in\setoftrees : \wass_T(\mu,\nu)=\wass_G(\mu,\nu)\}.
            \end{align*}
        Then the induced subgraph $\stategraph[X]$ is connected.
    \end{theorem}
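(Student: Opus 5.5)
The plan is to move the problem from the state graph $\stategraph$ to the polytope $\flowpoly(\mu,\nu)$, use convexity there, and then lift back using \Cref{prop:intro-vertices-of-polytope} and \Cref{prop:intro-connected-fibers}.

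First, I would observe that the transportation cost is a linear functional $c$ on flow vectors, namely $c(K)=\sum_{\{x,y\}\in E}|K(x,y)|$, or $\sum K(x,y)$ if flows are nonnegative and oriented. I would also use that $\wass_T(\mu,\nu)=c(\embedding[T])$, because on a tree the feasible flow is unique and equals the canonical embedding. By \eqref{eq:tree-decomposition}, $\wass_G(\mu,\nu)=\min_{K\in\flowpoly}c(K)$, since a linear function attains its minimum over a polytope at a vertex. Then $\Phi=\{K\in\flowpoly: c(K)=\wass_G\}$ is a face of $\flowpoly(\mu,\nu)$. A face of a polytope is a polytope, and its $1$-skeleton is connected. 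Moreover, the vertices and edges of $\Phi$ are vertices and edges of $\flowpoly$. Thus the set $V(\Phi)$ of vertices of $\Phi$ is connected through edges of $\flowpoly$. By \Cref{prop:intro-vertices-of-polytope}, $X=\bigcup_{K\in V(\Phi)}\embedding^{-1}[K]$.

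Second, I would handle the two kinds of connectivity separately. Within a single fiber, \Cref{prop:intro-vertices-of-polytope}(ii) gives $\embedding^{-1}[K]=\setoftrees[F_K]$, where $F_K$ is the support forest. \Cref{prop:intro-connected-fibers} then says $\stategraph[\setoftrees[F_K]]$ is connected, and this fiber lies entirely in $X$. Across fibers, I need the lifting statement announced in the introduction: for every polytope edge $[K,K']$ there exist $T\in\embedding^{-1}[K]$ and $T'\in\embedding^{-1}[K']$ that are adjacent in $\stategraph$. Combining the two, any two trees in $X$ are joined by a path in $\stategraph[X]$. Walk along the edge path in $\Phi$; inside each fiber use fiber connectivity, and between consecutive fibers use the lifted link. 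Every tree visited lies in a fiber over a vertex of $\Phi$, hence in $X$.

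The main obstacle is the edge-lifting step. I would prove it as follows. Take $K,K'$ adjacent vertices and a linear functional $w$ minimized on $\flowpoly$ exactly at $[K,K']$. Choose $T\supseteq F_K$ and $T'\supseteq F_{K'}$ with $|T\setminus T'|$ minimal. If $|T\setminus T'|\ge 2$, use the symmetric exchange property of spanning trees (matroid bases) to produce intermediate trees $T''$ with $\embedding[T'']$ minimizing $w$ as well. This is possible because $w(\embedding[\cdot])$ behaves additively under exchanges for tree flows, and linearity along the cycle produces a convex-combination argument. Such a $T''$ then lies in the fiber of $K$ or $K'$, which contradicts minimality. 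Since no vertex of $\flowpoly$ other than $K,K'$ lies on the edge, every such $T''$ maps to $K$ or $K'$. The key calculation is that a single exchange changes the flow only along the fundamental cycle, by a scalar multiple of the cycle flow. The flows $\embedding[T]$ and $\embedding[T']$ differ by a circulation supported on $T\cup T'$, which decomposes into fundamental cycles. If I can't make the exchange argument directly, I would fall back on the explicit description $\embedding[T]-\embedding[T']$ = sum of cycle circulations and argue that an edge direction must be a single cycle circulation.
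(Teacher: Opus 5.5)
\textbf{Gap: the edge-lifting step.} Your architecture is the paper's: pass to the optimal face $\Phi$ (the paper's $\flowpoly_\star$), use connectivity of its $1$-skeleton, identify fibers with $\setoftrees[F_K]$ via \cref{prop:intro-connected-fibers}, and lift polytope edges to links of $\stategraph$. The problem is the lifting step, which is the only nontrivial ingredient. The version you set out to prove, for \emph{every} edge of $\flowpoly$, is false.

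Take $G$ on $\{s,x,y,t\}$ with edges $sx,sy,xy,xt,yt$, and let $\mu=\delta_s$, $\nu=\delta_t$. Then $\embedding[T]$ is the indicator of the oriented $s$--$t$ path in $T$. The vertices of $\flowpoly$ are the paths $P_1=sxt$, $P_2=syt$, $P_3=sxyt$, $P_4=syxt$. They are linearly independent in $\mathbb{R}^{E'}$ (read off the coordinates $(x,y)$, $(y,x)$, then $(s,x)$, $(s,y)$), so $\flowpoly$ is a simplex and $[P_3,P_4]$ is an edge. Yet the fibers of $P_3$ and $P_4$ are the single trees $\{sx,xy,yt\}$ and $\{sy,xy,xt\}$, which differ in two edges, so no adjacent pair exists.

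Your symmetric exchange turns this pair into $\{sy,xy,yt\}$ and $\{sx,xy,xt\}$, whose flows are $P_2$ and $P_1$, both off the edge. So $w(\embedding[\cdot])$ is not additive under exchanges, because $\embedding$ takes positive parts; here $P_3+P_4=P_1+P_2+\mathbf{1}_{(x,y)}+\mathbf{1}_{(y,x)}$. The edge direction $P_3-P_4$ corresponds to the sum of the two triangle circulations, so your fallback (an edge direction is a single cycle circulation) also fails.

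\textbf{What is missing.} You need to lift only edges of $\Phi$, using two facts.
\begin{enumerate}[label=(\alph*)]
\item $\Phi$ is also a face of the feasible-flow polyhedron $P=\{J\ge 0: BJ=\mu-\nu\}$. The reason is that $\mathbf{1}^\top$ is strictly positive on the recession cone of $P$ (nonzero nonnegative circulations). Hence the optimal face of $P$ is bounded and is the convex hull of the optimal basic solutions, which are exactly the optimal tree flows.
\item No optimal flow charges both orientations of an edge, since cancelling would lower the cost.
\end{enumerate}

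Now let $[K,K']$ be an edge of $\Phi$. By (a) it is a $1$-dimensional face of $P$, so the columns of $B$ in the support of the midpoint $\overline{K}$ have a one-dimensional kernel. By (b) that support is an orientation of $F_K\cup F_{K'}$, which therefore contains a unique cycle $C$. Also $K-K'$ is a multiple of the cycle flow, so $F_K$ and $F_{K'}$ agree off $C$.

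Pick $e\in C\setminus F_K$ and $e'\in C\setminus F_{K'}$; both exist because $F_K$ and $F_{K'}$ are forests. Extend the forest $(F_K\cup F_{K'})\setminus\{e\}$ to a spanning tree $T$. Then $\embedding[T]=K$, and since the cycle of $T\cup\{e\}$ is $C\ni e'$, \cref{prop:intro-vertices-of-polytope}(ii) gives $\embedding[(T\cup\{e\})\setminus\{e'\}]=K'$.

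This is the unique-cycle argument of \cref{lem:adjacency-in-K}. There, the perturbation $\overline{K}+\varepsilon_1h_1+\varepsilon_2h_2$ a priori lies only in $P$. Fact (a) is what puts it back in $\flowpoly$, and the example above shows (a) cannot be dispensed with.

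\textbf{Keep your weak formulation.} Use \emph{some} adjacent pair of trees, together with fiber connectivity inside every fiber along the path. Lifting from an \emph{arbitrary} tree of $\embedding^{-1}[K]$, as \cref{lem:adjacency-in-K} asserts, can fail even on $\Phi$, because such a tree may miss two edges of $F_{K'}$. For example, take the $6$-cycle $s\,a\,b\,t\,c\,d$ plus a vertex $w$ joined to $c,d$, with $\mu=\delta_s$, $\nu=\delta_t$. The optimal tree $\{sa,ab,bt,sd,dw,wc\}$ has no neighbor whose path is $sdct$.
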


    \Cref{thm:intro-connectivity} is proved in~\cref{sec:connectivity-minimizers}. Our next result shows that local minimizers, defined below as so-called basins in the state graph, must also be global minimizers. 

    \begin{definition}\label{def:basin}
        Let $G=(V,E)$ be a connected graph and let $g\in\mathbb{R}^{\setoftrees}$ be fixed. Let $c\in\mathbb{R}$. A nonempty subset of spanning trees $X\subseteq \setoftrees$ is called a \emph{basin of $g$ on $\stategraph$ at level $c$} if the following hold:
            \begin{enumerate}[label=(\roman*)]
                \item The induced subgraph $\stategraph[X]$ is connected,
                \item $g(T) = c$ for each $T\in X$,
                \item $g(S) > c$ for each $S\in\partial X$,
            \end{enumerate}
        where $\partial{X}$ is the vertex boundary of $X$ in $\stategraph$, defined by
            \begin{align*}
                \partial X &= \{S\in\setoftrees\setminus X \;:\; \exists\, T\in X \text{ such that } S\sim T\}.
            \end{align*}
    \end{definition}

    \begin{theorem}\label{thm:intro-basins}
        Let $G=(V,E)$ be connected. For fixed probability density vectors $\mu,\nu:V\to\mathbb{R}$, if $X\subseteq\setoftrees$ is a basin of $T\mapsto\wass_T(\mu,\nu)$ in $\stategraph$ at level $c$, then $c=\wass(\mu,\nu)$.
    \end{theorem}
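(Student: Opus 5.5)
We will argue through the polytope $\flowpoly(\mu,\nu)$, using that $\wass_T(\mu,\nu)$ is a linear function of the canonical flow. Write $\ell(K)=\sum_{\{x,y\}\in E} |K(x,y)|$, or the corresponding linear cost on oriented flows. Then $\wass_T(\mu,\nu)=\ell(\embedding[T])$, and $\ell$ is linear on $\flowpoly(\mu,\nu)$, because the canonical embedding records each edge flow with a fixed orientation.

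Suppose $X$ is a basin at level $c$ and, for contradiction, $c>\wass(\mu,\nu)$. Pick $T\in X$ and set $K=\embedding[T]$. By \cref{prop:intro-vertices-of-polytope}, $K$ is a vertex of $\flowpoly(\mu,\nu)$. By \cref{eq:tree-decomposition}, some vertex $K^*=\embedding[T^*]$ satisfies $\ell(K^*)=\wass(\mu,\nu)<c$. For a linear function on a polytope, a vertex that is not a minimizer has a neighbouring vertex $K'$ (joined by an edge of the polytope) with $\ell(K')<\ell(K)$. This is the simplex-method fact that the edge directions at a vertex generate its tangent cone.

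We then lift this polytope edge to the state graph. We will need the fact, promised in the introduction, that edges of $\flowpoly(\mu,\nu)$ lift to links of $\stategraph$. Concretely: if $K$ and $K'$ are adjacent vertices, then there exist $S\in\embedding^{-1}[K]$ and $S'\in\embedding^{-1}[K']$ with $S\sim S'$. We would prove this by taking the forests $F=\supp K$ and $F'=\supp K'$ and noting that $\supp(K-K')$ must be a single cycle, since the edge direction is a circulation that is not decomposable into smaller ones. We then choose a spanning tree containing $F\cup F'$ minus one cycle edge, and exchange that edge.

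Next we connect the lifted tree $S$ back to $X$. Both $T$ and $S$ lie in $\embedding^{-1}[K]=\setoftrees[F]$, where $F=\supp K$ by \cref{prop:intro-vertices-of-polytope}(ii). By \cref{prop:intro-connected-fibers}, $\stategraph[\setoftrees[F]]$ is connected, and every tree in it has cost $\ell(K)=c$.

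Now walk from $T$ to $S$ inside this fiber. Every tree on the path has value exactly $c$. Every tree in $\partial X$ has value $>c$, so the path never leaves $X$, and hence $S\in X$. Then $S'\sim S$ has $\wass_{S'}(\mu,\nu)=\ell(K')<c$. Either $S'\in X$, which contradicts condition (ii), or $S'\in\partial X$, which contradicts condition (iii). Hence $c=\wass(\mu,\nu)$.

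The main obstacle is the edge-lifting step: showing that adjacent vertices of $\flowpoly(\mu,\nu)$ have representative trees differing by one exchange. A possible issue is degeneracy, where $\supp(K-K')$ could a priori be a union of cycles. We would handle this by observing that a polytope edge is the set of minimizers of some linear functional $w$. If $K-K'$ decomposed into several cycle circulations, then applying the cycles one at a time would produce intermediate tree flows. These would be vertices of $\flowpoly(\mu,\nu)$, tight for $w$, lying on the segment or off it, contradicting that the face is one-dimensional.
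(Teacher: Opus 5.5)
\paragraph{The gap: the lifting step.} The lifting step is false for general edges of $\flowpoly(\mu,\nu)$, even in your weaker existential form. $\flowpoly(\mu,\nu)$ is only the convex hull of the tree flows. It is not the polyhedron $P=\{J\in\mathbb{R}^{E'}: J\ge 0,\ BJ=\mu-\nu\}$ of all feasible flows, so a functional that exposes one of its edges can be unbounded below on $P$. Nothing then forces the edge direction to be a single-cycle circulation, and ``applying the cycles one at a time'' need not pass through tree flows at all.

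Here is a concrete counterexample. Let $G$ be the theta graph on $u,v,a_1,a_2,a_3$ with edges $ua_i,a_iv$ ($i=1,2,3$), and set $\mu=0.4\,\delta_u+0.2(\delta_{a_1}+\delta_{a_2}+\delta_{a_3})$, $\nu=\delta_v$. Take $T_1=\{ua_1,a_2v,ua_3,a_3v\}$ and $T_2=\{ua_1,a_1v,ua_2,a_3v\}$.
\begin{itemize}
\item Every tree edge carries positive flow, so the fibers of $\embedding[T_1]$ and $\embedding[T_2]$ are the singletons $\{T_1\}$ and $\{T_2\}$.
\item $|T_1\triangle T_2|=4$, so $T_1\not\sim T_2$.
\item Define $w$ by $w(a_1,u)=-35$, $w(a_2,u)=-5$, $w(a_3,u)=w(u,a_3)=10$, $w(u,a_2)=20$, and $w=0$ on all other arcs. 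It equals $-1$ at $\embedding[T_1]$ and $\embedding[T_2]$ and is nonnegative at the other ten tree flows.
\end{itemize}
So $\{\embedding[T_1],\embedding[T_2]\}$ is an edge of $\flowpoly(\mu,\nu)$ that lifts to no link of $\stategraph$. Here $\embedding[T_1]-\embedding[T_2]$ is nonzero on all six edges, and $F\cup F'=E$ has two independent cycles.

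There is a second, separate problem. Even when $\supp(K-K')$ is a single cycle, $F\cap F'$ can close further cycles. What your exchange actually needs is that $F\cup F'$ itself is unicyclic.

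\paragraph{The repair.} Take the improving edge in $P$ rather than in $\flowpoly(\mu,\nu)$.
\begin{itemize}
\item The vertices of $P$ are exactly the $\embedding[T]$ (its basic feasible solutions).
\item Since $\mathbf{1}^\top J\ge 0$ on $P$, an improving unbounded edge is impossible. So at a nonoptimal vertex $K$, some improving edge direction is a bounded edge of $P$, ending at a vertex $K'$ with smaller cost.
\item Adjacency in $P$ means $\ker B_S$ is one-dimensional, where $S=\supp K\cup\supp K'$. This excludes opposite arcs: otherwise $\mathbf{1}_{(x,y)}+\mathbf{1}_{(y,x)}$ and $K-K'$ would be independent elements of the kernel. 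Hence $F\cup F'$ contains exactly one cycle $C$.
\item Pick $a\in C\setminus F$ and $b\in C\setminus F'$. Extend $(F\cup F')\setminus\{a\}$ to a spanning tree $S\supseteq F$. Since $C$ is the unique cycle of $S\cup\{a\}$, the set $S'=(S\cup\{a\})\setminus\{b\}$ is a spanning tree containing $F'$.
\end{itemize}
Your walk inside $\setoftrees[F]$ then finishes the argument exactly as you wrote it.

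\paragraph{Comparison with the paper.} That walk is the real difference from the paper, which lifts the edge at the given $T\in X$ via \cref{lem:adjacency-in-K}. Lifting at a prescribed tree is not always possible. For $\mu=\delta_1$, $\nu=\delta_3$ on the graph with edges $12,23,14,46,63,15,25$, the tree $\{14,46,63,15,25\}$ has cost $3>\wass(\mu,\nu)=2$, yet all seven of its neighbours have cost $3$. Your detour through the fiber is therefore what makes the argument correct.
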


    \Cref{thm:intro-basins} is proved in~\cref{sec:no-local-minima}, leading to a steepest descent algorithm for solving the optimal transport problem on $G$. Starting from an arbitrary initial tree $T_0\in\setoftrees$, we repeatedly move to a neighboring tree with strictly smaller transport cost until we arrive at a tree with cost $\wass_G(\mu,\nu)$. We show that if the input measures are assumed to have discrete values, then this procedure converges to a global minimizer in a number of steps which is polynomial in the size of the vertex set of $G$.

    \begin{theorem}\label{thm:steepest-descent-empirical}
        Let $G=(V,E)$ be a connected graph and let $\mu,\nu:V\to\mathbb{R}$ be fixed probability density vectors. Assume $\mu,\nu$ are \emph{discrete}, i.e., for some $\delta>0$, $\mu-\nu\in\delta\mathbb{Z}^{V}$. Then there exists a path of steepest descent for the transport function in $\stategraph$ arriving at the optimal value $\wass(\mu,\nu)$ in at most $\frac{n-1}{\delta}$ steps.
    \end{theorem}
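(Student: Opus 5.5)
The bound should come from combining three facts: a lattice property of tree costs, an \emph{a priori} upper bound on the cost of any tree, and the existence of a strictly improving move from every non-optimal tree. The last of these is essentially \cref{thm:intro-basins}.

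\emph{Lattice property.} Recall the tree flow formula \cref{eq:tree-flow-formula}. For a spanning tree $T$ and an edge $e\in T$, removing $e$ splits $V$ into two components $A_e$ and $V\setminus A_e$. Then
\begin{align*}
\wass_T(\mu,\nu)=\sum_{e\in T}\Bigl|\sum_{x\in A_e}(\mu-\nu)(x)\Bigr|.
\end{align*}
If $\mu-\nu\in\delta\mathbb{Z}^V$, every summand lies in $\delta\mathbb{Z}_{\ge 0}$, so $\wass_T(\mu,\nu)\in\delta\mathbb{Z}$ for every $T\in\setoftrees$. Consequently, any step $T\to T'$ in $\stategraph$ with $\wass_{T'}<\wass_T$ decreases the cost by at least $\delta$.

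\emph{Initial bound.} Since $\mu,\nu$ are probability vectors, each partial sum satisfies
\begin{align*}
\Bigl|\sum_{x\in A_e}(\mu-\nu)(x)\Bigr| = \bigl|\mu(A_e)-\nu(A_e)\bigr| \le 1 .
\end{align*}
Hence $\wass_{T_0}(\mu,\nu)\le n-1$ for any starting tree $T_0$. Since $\wass(\mu,\nu)\ge 0$, the total possible decrease is at most $n-1$. A strictly descending path in $\stategraph$ therefore has at most $(n-1)/\delta$ steps. The bound can be sharpened to $(\wass_{T_0}-\wass)/\delta$.

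\emph{Existence of a descending move.} I must show the following: whenever $\wass_T(\mu,\nu)>\wass(\mu,\nu)$, the descent can continue. Let $c=\wass_T$ and let $X$ be the connected component of $T$ in the induced subgraph of $\stategraph$ on the level set $\{S:\wass_S=c\}$. By construction $X$ satisfies conditions (i) and (ii) of \cref{def:basin}, and every $S\in\partial X$ has $\wass_S\ne c$. If all of them had $\wass_S>c$, then $X$ would be a basin at level $c$. By \cref{thm:intro-basins} this forces $c=\wass(\mu,\nu)$, a contradiction. So some $S\in X$ has a neighbor $S'$ with $\wass_{S'}<c$.

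The steepest descent path is then built as follows. At each tree, move to a neighbor of minimal cost if that cost is strictly lower. Otherwise, traverse the plateau $X$ at constant cost to such an $S$ and then descend. Iterating, each strict descent lowers the cost by at least $\delta$. By the two bounds above, at most $(n-1)/\delta$ descending steps occur before the cost equals $\wass(\mu,\nu)$.

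\emph{Main obstacle.} The delicate point is the plateau moves: the count $(n-1)/\delta$ should bound \emph{all} steps, not only the descending ones. My first attempt to remove plateau moves entirely would use the polytope picture. The cost $K\mapsto\sum_e|K(e)|$ is linear on $\flowpoly(\mu,\nu)$, since its points are nonnegative oriented flows. So a non-optimal vertex $\embedding[T]$ has an adjacent polytope vertex of strictly smaller cost, as in the simplex method, and polytope edges lift to links of $\stategraph$.

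What remains is to show that the lift can be started at $T$ itself rather than at some other tree in the fiber $\embedding^{-1}[\embedding[T]]$. By \cref{prop:intro-vertices-of-polytope}(ii), that fiber consists of all trees containing the support forest $F$. I would try to realize the polytope edge as pushing flow around a cycle created by adding one edge $e$ to $T$, and then check that the cycle in $T\cup\{e\}$ carries the same cost decrease. If this fails, I would fall back to the statement above. There, steps are counted as strict descents, and plateau traversals are free moves within a fiber. By \cref{prop:intro-connected-fibers}, such a fiber is connected in $\stategraph$, so these free moves are always available.
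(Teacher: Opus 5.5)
\textbf{Verdict: there is a genuine gap.} You correctly use $\mu-\nu\in\delta\mathbb{Z}^V$ directly for the lattice bound, and the a priori bound $\wass_{T_0}(\mu,\nu)\le n-1$ is also correct. The gap is in the step count. The statement needs a path in which \emph{every} move is a strict descent. That rests on the fact the paper isolates as \cref{lem:descent-step}: every tree $T$ with $\wass_T(\mu,\nu)>\wass(\mu,\nu)$ \emph{itself} has a neighbor of strictly smaller cost. \Cref{thm:intro-basins} does not give this. It only says that the constant-cost component of $T$ has a cheaper boundary tree, so your path may contain constant-cost moves that $(n-1)/\delta$ does not control. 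Your fallback therefore proves only that at most $(n-1)/\delta$ of the moves are descents, which is weaker than the theorem. Also, those plateau moves need not stay inside one fiber of $\embedding$. Distinct vertices of $\flowpoly(\mu,\nu)$ can have equal cost, so a level-set component can meet several fibers, and \cref{prop:intro-connected-fibers} does not bound anything here.

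The paper obtains \cref{lem:descent-step} by the route you sketch. \Cref{lem:edge-descent-polytope} gives a cheaper vertex adjacent to $\embedding[T]$, and \cref{lem:adjacency-in-K} asserts that every edge of $\flowpoly(\mu,\nu)$ at $\embedding[T]$ lifts to a link at $T$ itself. Do not try to prove that lifting claim in this generality, because it is false. Take the graph with edges $ab,bc,cd,da,ap,pc$, with $\mu=\tfrac13\mathbf{1}_b+\tfrac23\mathbf{1}_d$, $\nu=\tfrac23\mathbf{1}_a+\tfrac13\mathbf{1}_c$, and $T=\{bc,da,ap,pc\}$. Then $\embedding[T]$ is supported on $\{bc,da\}$. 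The optimal face of $\flowpoly(\mu,\nu)$ is the edge joining $\embedding[T]$ to the tree flow supported on $\{ab,cd,da\}$: both cost $1=\wass(\mu,\nu)$, and every other tree flow costs at least $5/3$. Yet every neighbor of $T$ misses $ab$ or $cd$, so none realizes that edge. The paper's proof tacitly assumes that the edges of $C\cap F_2$ other than $e$ already lie in $T$.

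What you actually need is only that \emph{some} neighbor of $T$ is cheaper. Your fundamental-cycle idea proves this directly, provided you do not insist on reproducing a prescribed polytope edge. Fix an orientation of $E$. Let $J_T$ be the signed flow of $T$, let $J^\ast$ be an optimal signed flow, and let $\Phi(J)=\sum_{e\in E}|J(e)|$.
\begin{itemize}
\item The circulation $J^\ast-J_T$ equals $\sum_{f\in E\setminus T}(J^\ast-J_T)(f)\,\chi_f$, where $\chi_f$ is the signed fundamental cycle of $f$.
\item The one-sided derivative $D(h)=\sum_{J_T(e)\ne0}\operatorname{sgn}(J_T(e))\,h(e)+\sum_{J_T(e)=0}|h(e)|$ of $\Phi$ at $J_T$ is sublinear.
\item By convexity, $D(J^\ast-J_T)\le\Phi(J^\ast)-\Phi(J_T)<0$. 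Hence $D(\sigma\chi_f)<0$ for some $f\notin T$ and some sign $\sigma$.
\item The map $t\mapsto\Phi(J_T+t\sigma\chi_f)$ is convex and piecewise linear, and its slope is eventually $|C_T(f)|>0$. So it strictly decreases up to its first positive breakpoint $t_1$.
\item At $t_1$ some tree edge $e_i$ of the fundamental cycle carries zero flow. Thus $J_T+t_1\sigma\chi_f$ is the flow of $T\cup\{f\}\setminus\{e_i\}$, which is a strictly cheaper neighbor.
\end{itemize}
With this lemma in place, your two bounds give the theorem, with every step a strict descent of at least $\delta$.
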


    \Cref{thm:steepest-descent-empirical} is proved in~\cref{sec:steepest-descent}.

    As an application of the preceding theorems, in~\cref{sec:ollivier-ricci-curvature}, we consider the problem of computing the Ollivier--Ricci curvature of a graph. By combining our steepest descent framework with appropriate choices for the initial spanning trees, we obtain polynomial-time bounds for computing the curvature for each edge in the graph.

    The paper is organized as follows. In \cref{sec:flow-polytope} we define the tree polytope and the canonical embedding. In \cref{sec:connectivity-minimizers} we prove connectivity of global minimizers in the state graph. In \cref{sec:no-local-minima} we introduce basins and rule out suboptimal local minima. In \cref{sec:steepest-descent} we analyze steepest descent and prove a polynomial step bound. In~\cref{sec:ollivier-ricci-curvature} we apply our results to the problem of computing Ollivier--Ricci curvature on graphs. 

    \section{The tree polytope}\label{sec:flow-polytope}

    For $G=(V,E)$ fixed, we consider the associated bi-oriented graph with directed edges given by
        \begin{align}\label{eq:bi-oriented-edges}
            E' &= \{(x,y), (y,x) \,:\, \{x,y\}\in E\}.
        \end{align}
    We make use of the bi-oriented incidence matrix $B\in\mathbb{R}^{V\times E'}$, with entries given by
        \begin{align*}
            B_{v, e} &:= \begin{cases}
                +1 & \text{if } e = (v,w) \text{ for some }w\in V,\\
                -1 & \text{if } e = (w,v) \text{ for some }w\in V,\\
                0 & \text{otherwise}.
            \end{cases}
        \end{align*}
    For probability density functions $\mu,\nu:V\to\mathbb{R}$, a function $J:E'\to\mathbb{R}$ is called a $(\mu,\nu)$-feasible flow, or simply a \emph{flow}, if $J(e)\ge 0$ for each $e\in E'$ and $BJ = \mu-\nu$. The following lemma states that the optimal transport cost $\wass_G(\mu,\nu)$ can be expressed as the minimum of a linear functional over the set of all $(\mu,\nu)$-feasible flows.

    \begin{lemma}\label{lem:flow-formulation}
        Let $G=(V,E)$ be a connected graph, and let $\mu,\nu\in\prob(V)$ be fixed. Then the following holds:
            \begin{align}\label{eq:flow-formulation}
                \wass_G(\mu,\nu) &= \min\left\{\sum_{e\in E'} J(e) \,:\, J\in\mathbb{R}^{E'},\, BJ = \mu-\nu,\, J(e)\ge 0\text{ for each }e\in E'\right\}.
            \end{align}
    \end{lemma}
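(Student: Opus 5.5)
We prove the two inequalities separately. Denote the right-hand side of \cref{eq:flow-formulation} by $M$. The feasible set is a nonempty polyhedron: nonempty because we can route $\mu-\nu$ along the edges of any spanning tree in a connected graph. The objective is nonnegative on it, so the minimum is attained. Recall also that a coupling $\pi\in\Pi(\mu,\nu)$ has marginals $\sum_y \pi(x,y)=\mu(x)$ and $\sum_x\pi(x,y)=\nu(y)$.

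\emph{Step 1: $M\le \wass_G(\mu,\nu)$.} Take an optimal coupling $\pi$. For each ordered pair $(x,y)$, fix a shortest path $x=v_0,v_1,\dots,v_k=y$ in $G$ with $k=d_G(x,y)$. Push $\pi(x,y)$ units along this path; that is, add $\pi(x,y)$ to $J(v_{i-1},v_i)$ for each $i$. Each path flow contributes $\pi(x,y)(\mathbf 1_x-\mathbf 1_y)$ to $BJ$, since $B$ has $+1$ at the tail and interior vertices cancel. Summing over pairs and using the marginal conditions gives $BJ=\mu-\nu$. Clearly $J\ge 0$, and $\sum_e J(e)=\sum_{x,y}\pi(x,y)d_G(x,y)=\wass_G(\mu,\nu)$.

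\emph{Step 2: $\wass_G(\mu,\nu)\le M$.} This direction is the main obstacle: we need a flow decomposition that yields a coupling. Take an optimal flow $J$.
\begin{enumerate}[label=(\alph*)]
\item First cancel antiparallel pairs: replace $J(x,y),J(y,x)$ by $J(x,y)-m$ and $J(y,x)-m$, where $m=\min\{J(x,y),J(y,x)\}$. This preserves $BJ$ and does not increase cost.
\item Next remove directed cycles: subtract the minimum flow value along any directed cycle in the support. This also preserves $BJ$ and does not increase cost. Iterate (a) and (b) until the support of $J$ is acyclic.
\item Apply the standard path decomposition of an acyclic flow. Repeatedly start at a vertex with positive excess $(BJ)(x)>0$ and follow support edges, which must terminate, by acyclicity, at a vertex $y$ with negative excess. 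Subtract the bottleneck value $\alpha$ along this path. This gives $J=\sum_P \alpha_P \chi_P$, where each directed path $P$ runs from a vertex $s(P)$ with $\mu>\nu$ to a vertex $t(P)$ with $\mu<\nu$. Termination follows because each extraction zeroes an edge.
\end{enumerate}

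\emph{Step 3: build the coupling.} Define
\[
\pi(x,y)=\sum_{P:\,s(P)=x,\,t(P)=y}\alpha_P \quad\text{for } x\neq y,
\qquad
\pi(x,x)=\min\{\mu(x),\nu(x)\}.
\]
Since $\sum_P \alpha_P(\mathbf 1_{s(P)}-\mathbf 1_{t(P)})=\mu-\nu$, and sources and sinks lie in disjoint vertex sets, the row sums are $\min\{\mu,\nu\}+(\mu-\nu)_+=\mu$. The column sums are $\nu$ by the same argument. Hence $\pi\in\Pi(\mu,\nu)$ and
\[
\sum_{x,y}\pi(x,y)d_G(x,y)\le\sum_P\alpha_P|P|=\sum_e J(e)=M.
\]

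Combining Steps 1 and 3 gives equality. The delicate bookkeeping lies in Step 2(c): we must verify that the decomposition terminates and that sources and sinks are separated, so that the diagonal terms of $\pi$ are consistent with the marginals.
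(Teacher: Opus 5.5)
The paper gives no proof of this lemma; it only points to LP strong duality. The dual of the flow program maximizes $\sum_x\phi(x)(\mu(x)-\nu(x))$ over $\phi$ with $|\phi(x)-\phi(y)|\le 1$ for $\{x,y\}\in E$. This is the Kantorovich--Rubinstein dual of $\wass_G(\mu,\nu)$, since being $1$-Lipschitz along edges is the same as being $1$-Lipschitz for $d_G$. Your primal route (geodesic routing in one direction, path decomposition in the other) avoids duality and is constructive, which is a fair trade. Step 1, Step 2(a)--(b), and the cost estimate in Step 3 are correct. The problem is exactly at the point you flag as delicate.

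As written, Step 2(c) subtracts the edge bottleneck $\alpha=\min_{e\in P}J(e)$; your termination argument, ``each extraction zeroes an edge'', relies on this choice. With it, the claim that every path runs from $\{\mu>\nu\}$ to $\{\mu<\nu\}$ is false. Take the path graph with edges $\{w,x\},\{x,y\}$, $\mu=\tfrac12\mathbf{1}_w+\tfrac12\mathbf{1}_x$, $\nu=\mathbf{1}_y$, and the optimal flow $J(w,x)=\tfrac12$, $J(x,y)=1$. Start at $x$, which has excess $\tfrac12$. The path $x\to y$ has bottleneck $1$, and after subtracting it, $x$ has excess $-\tfrac12$, so the next path is $w\to x$. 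Then $\pi(x,y)=1>\mu(x)$, and the column sum of $\pi$ at $x$ is $\tfrac12\neq\nu(x)=0$, so $\pi\notin\Pi(\mu,\nu)$.

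The standard repair is to take $\alpha=\min\{\min_{e\in P}J(e),\ \mathrm{exc}(s),\ -\mathrm{exc}(t)\}$, where $s,t$ are the endpoints of $P$ and $\mathrm{exc}$ is the divergence $BJ$ of the current residual flow. With this choice:
\begin{itemize}
\item No vertex ever changes the sign of its excess, so every source lies in $\{\mu>\nu\}$ and every sink in $\{\mu<\nu\}$.
\item Termination holds because each extraction zeroes either an edge or a vertex excess, and neither is ever revived, so there are at most $|E'|+|V|$ extractions.
\item Once no positive excess remains, the residual is a nonnegative acyclic circulation, hence zero. This is what gives $J=\sum_P\alpha_P\chi_P$, a step you left implicit.
\end{itemize}
With this modification, Step 3 goes through verbatim.
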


    \cref{lem:flow-formulation} can be proved, for example, by applying strong duality for linear programs, and we omit the proof here (see, e.g.,~\cite[Ch. 6]{peyre2019computational}, see also~\cite{essid2018quadratically}). Furthermore, in the case of a tree graph, the optimal flow achieving the value of the program~\cref{eq:flow-formulation} is uniquely determined. 

    \begin{lemma}\label{lem:transport-on-trees-formula}
        Let $T=(V,E)$ be a tree and let $\mu,\nu\in\prob(V)$ be fixed. Let $E'$ denote the bi-oriented edges of $T$ as in~\cref{eq:bi-oriented-edges}. Then the minimizer of the program~\cref{eq:flow-formulation} is unique and for each $(x,y)\in E'$ it is given by
            \begin{align}\label{eq:tree-flow-formula}
                J_T(x,y) &= \max\left\{\sum_{z\in S_x}(\mu(z)-\nu(z)),0\right\},
            \end{align}
        where $S_x$ denotes the connected component of $E\setminus\{\{x,y\}\}$ containing $x$.
    \end{lemma}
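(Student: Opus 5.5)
The plan is to exploit the fact that on a tree the constraint $BJ=\mu-\nu$ determines the net flow on each edge, so that only the split of that net flow between the two orientations is free. For each edge $\{x,y\}\in E$ of the tree, removing it disconnects $T$ into exactly two components, $S_x\ni x$ and $S_y\ni y$, with $S_x\sqcup S_y=V$. I will write $m_x:=\sum_{z\in S_x}(\mu(z)-\nu(z))$. Because $\mu,\nu$ are probability densities, $\sum_{z\in V}(\mu(z)-\nu(z))=0$, and therefore $m_y=-m_x$.

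\textbf{Step 1 (net flow is forced).} Take any feasible $J$ and sum the rows of $BJ=\mu-\nu$ over the vertices of $S_x$. A directed edge with both endpoints in $S_x$ contributes $+1$ and $-1$, so it cancels. The only directed edges with exactly one endpoint in $S_x$ are $(x,y)$ and $(y,x)$, because $\{x,y\}$ is the unique edge of $T$ crossing the cut. This gives
\begin{align*}
J(x,y)-J(y,x)=m_x .
\end{align*}

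\textbf{Step 2 (lower bound on cost).} Since $J\ge 0$, Step 1 yields $J(x,y)\ge \max\{m_x,0\}$ and $J(y,x)\ge\max\{-m_x,0\}=\max\{m_y,0\}$. Adding the two orientations, the cost on $\{x,y\}$ is at least $|m_x|$. Summing over edges, every feasible $J$ has cost at least $\sum_{\{x,y\}\in E}|m_x|$.

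\textbf{Step 3 (attainment and uniqueness).} First I check that $J_T$ from \cref{eq:tree-flow-formula} is feasible.
\begin{enumerate}[label=(\alph*)]
\item Nonnegativity of $J_T$ is immediate from the formula.
\item For conservation at a vertex $v$, each tree edge $\{v,w\}$ contributes $J_T(v,w)-J_T(w,v)=\max\{m_v^{(w)},0\}-\max\{-m_v^{(w)},0\}=m_v^{(w)}$. Here $m_v^{(w)}$ is the excess of the component of $T\setminus\{v,w\}$ containing $v$, which equals $\mu(v)-\nu(v)$ minus the excess of the branch hanging off $v$ through $w$.
\item The branches through the neighbours $w$ of $v$ partition $V\setminus\{v\}$. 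Write $\deg v=d$ and let the branch excesses be $b_w$, with $\sum_w b_w=-(\mu(v)-\nu(v))$. Then $\sum_w m_v^{(w)}=\sum_w\bigl((\mu(v)-\nu(v))+\sum_{w'\ne w}b_{w'}\bigr)=d(\mu(v)-\nu(v))+(d-1)\sum_w b_w=\mu(v)-\nu(v)$, as required. An equivalent route is to do induction by peeling off a leaf.
\end{enumerate}
The cost of $J_T$ equals $\sum|m_x|$, so $J_T$ attains the lower bound and is a minimizer.

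For uniqueness, suppose $J$ also attains the bound. Since the bound is a sum of per-edge inequalities, equality must hold on every edge. On an edge, $J(x,y)+J(y,x)=|m_x|$ together with $J(x,y)-J(y,x)=m_x$ pins down both values, and they coincide with $J_T$.

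The main obstacle is keeping the bookkeeping in Step 3 clean, namely the verification that $BJ_T=\mu-\nu$. I would avoid it entirely with a uniqueness-first argument. Feasible flows exist on a connected graph, and the feasible set is closed and bounded below in cost, so a minimizer $J^*$ exists. Steps 1–2 show that any $J^*$ whose cost equals $\sum|m_x|$ must equal $J_T$. It then remains to show that the minimum equals $\sum|m_x|$. Take any feasible $J$ and replace it on each edge by $\bigl(J(x,y)-\min\{J(x,y),J(y,x)\},\,J(y,x)-\min\{J(x,y),J(y,x)\}\bigr)$. This subtracts the same amount from both orientations, so the net flow on each edge, and hence $BJ$, is unchanged; feasibility is preserved. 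By Step 1 the modified flow is exactly $J_T$. This shows at once that $J_T$ is feasible and that its cost is no larger than that of $J$, completing the proof.
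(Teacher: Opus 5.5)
Your proof is correct. The paper does not prove this lemma; it cites the literature (Bapat's treatment via the Moore--Penrose inverse of the tree incidence matrix, and Robertson et al.). The mechanism appears in the paper only implicitly: in \cref{rmk:canonical-embedding}, where $J_T$ is the positive and negative part of the unique signed solution of $\widetilde{B}X=\mu-\nu$ on a fixed orientation of $T$, and in the leaf-peeling injectivity argument in the proof of \cref{prop:vertices-of-polytope}. Your argument makes this elementary and explicit in two ways. Step 1 gives the signed solution in closed form by summing the conservation equations over one side of the cut at $\{x,y\}$, replacing an abstract uniqueness argument. Your cancellation step, subtracting $\min\{J(x,y),J(y,x)\}$ from both orientations, is exactly the passage to positive and negative parts. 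What your route adds beyond the remark is the optimization content: a flow that uses both orientations of some edge is never optimal, and this is where optimality and uniqueness come from. It also yields the identity $\wass_T(\mu,\nu)=\sum_{\{x,y\}\in E}|m_x|$, which the paper uses in \cref{sec:ollivier-ricci-curvature}. The cited linear-algebraic treatment is heavier than this statement requires.

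Two small points. First, in (b) you describe $m_v^{(w)}$ as $\mu(v)-\nu(v)$ minus the branch excess $b_w$. In fact $m_v^{(w)}=-b_w=(\mu(v)-\nu(v))+\sum_{w'\neq w}b_{w'}$, which is the expression you correctly use in (c), so the verification stands. It also shortens to $\sum_w m_v^{(w)}=-\sum_w b_w=\mu(v)-\nu(v)$.

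Second, in your last paragraph the appeal to existence of a minimizer is superfluous. ``Closed and bounded below in cost'' is also not by itself the right justification; compactness of sublevel sets is. The cancellation step already shows three things: $J_T$ is feasible, $J_T$ costs no more than any feasible $J$, and it costs strictly less whenever $J\neq J_T$, since then some edge carries flow in both directions. So it gives uniqueness without Steps 2--3. The only input it needs is one feasible flow. That exists because $\sum_v(\mu(v)-\nu(v))=0$, so you can route each vertex's excess to or from a fixed root along tree paths.
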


    For a proof of~\cref{lem:transport-on-trees-formula} we refer to, e.g.,~\cite{bapat1997moore} (see also~\cite{robertson2024all}). By using~\cref{eq:tree-flow-formula} specifically, we may associate to each spanning tree of $G$ a distinguished feasible flow as follows.

    \begin{definition}[Canonical embedded tree flow]\label{def:canonical-embedding}
        Let $G=(V,E)$ be connected, let $\mu,\nu\in\prob(V)$ be fixed probability density functions, and let $T\in\setoftrees$ be a fixed spanning tree of $G$. Let $T'$ denote the bi-oriented edges of $T$ as in~\cref{eq:bi-oriented-edges}. For $(x,y)\in T'$, let $S_x$ denote the connected component of $T$ containing $x$ after removing the edge $\{x,y\}$. Define the \emph{canonical embedding} of $T$ to be the vector $\embedding[T]\in \mathbb{R}_{\ge 0}^{E'}$ as follows:
            \begin{align}\label{eq:canonical-embedding}
                \embedding[T](x,y) &= \begin{cases}
                    \max\left\{\sum_{z\in S_x} (\mu(z)-\nu(z)), 0\right\}, & \text{if } \{x,y\}\in T,\\
                    0, & \text{if } \{x,y\}\notin T.
                \end{cases}
            \end{align}
    \end{definition}

    \begin{remark}\label{rmk:canonical-embedding}
        The embedding vector $\embedding[T]$ may equivalently be defined as follows. Fix a distinguished orientation of $T$ and then let $\widetilde{B}$ denote the restriction of the bi-oriented incidence matrix $B$ to the columns indexed by the oriented edges of $T$. Since $\widetilde{B}$ has trivial kernel, the linear system $\widetilde{B} X = \mu-\nu$ admits a unique solution $X$. 
        Then $\embedding[T]$ as in~\cref{eq:canonical-embedding} is recovered by setting $\embedding[T](x,y)=\max\{X(x,y),0\}$ and $\embedding[T](y,x)=\max\{-X(x,y),0\}$ for each oriented edge $(x,y)$ of $T$, and by assigning $0$ to all remaining coordinates in $E'$. 
    \end{remark}

    \begin{definition}\label{def:flow-polytope}
        Let $G=(V,E)$ be a connected graph, and let $\mu,\nu\in\prob(V)$ be fixed. The \emph{$(\mu,\nu)$-tree polytope,} or simply the tree polytope, is given by
            \begin{align*}
                \flowpoly(\mu,\nu) &:= \mathrm{conv}\left(\{\embedding[T] \,:\, T\in\mathcal{T}(G)\} \right),
            \end{align*}
        where $\mathrm{conv}(\cdot)$ denotes the convex hull of a set in Euclidean space.
    \end{definition}    

    The advantage of using the tree polytope is that the problem of determining the transportation distance $\wass_G(\mu,\nu)$ can now be modeled on the vertices of $\flowpoly(\mu,\nu)$. As it turns out, the skeleton structure of $\flowpoly(\mu,\nu)$ is intimately related to that of the spanning tree state graph $\stategraph$. We state and prove the following proposition, which makes this precise.

    \begin{proposition}\label{prop:vertices-of-polytope}
        Let $G=(V,E)$ be a connected graph and let $\mu,\nu\in\prob(V)$ be fixed. Then the following statements hold.
            \begin{enumerate}[label=(\roman*)]
                \item For each spanning tree $T\in\setoftrees$, $\embedding[T]$ is a vertex of $\flowpoly(\mu,\nu)$.
                \item For each vertex $K\in\flowpoly(\mu,\nu)$, there exists at least one spanning tree $T\in\setoftrees$ such that $K = \embedding[T]$. Moreover, the support of $K$, i.e., edges $\{x,y\}\in E$ such that $K(x,y)>0$, forms a forest $F$ in $G$, and the fiber $\embedding^{-1}[K]$ consists of all spanning trees $T$ of $G$ containing $F$.
            \end{enumerate}
    \end{proposition}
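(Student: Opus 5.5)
Each $\embedding[T]$ is a feasible $(\mu,\nu)$-flow (it is the tree-optimal flow of \cref{lem:transport-on-trees-formula} extended by zero), so $\flowpoly(\mu,\nu)$ sits inside the flow polyhedron $P=\{J\ge 0: BJ=\mu-\nu\}$. The plan is to show that every $\embedding[T]$ is a basic feasible solution (extreme point) of $P$; it is then a fortiori a vertex of the smaller polytope $\flowpoly(\mu,\nu)$. Part (ii) then follows from the elementary fact that every vertex of a convex hull of finitely many points is one of those points.

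\textbf{Step 1 (support structure).} For any $T$, at most one of $\embedding[T](x,y)$, $\embedding[T](y,x)$ is positive, because the sums over $S_x$ and $S_y$ are negatives of each other. Moreover all positive entries lie on edges of $T$. Hence the undirected support $F_T$ of $\embedding[T]$ is a forest contained in $T$, and the columns of $B$ indexed by the positive oriented edges are linearly independent. Linear independence holds because a forest's oriented incidence columns are independent, and picking one orientation per edge avoids the pair $\pm$ duplicate columns. By the standard characterization of extreme points of $\{J\ge 0,\ BJ=b\}$, $\embedding[T]$ is therefore an extreme point of $P$. This proves (i), since an extreme point of $P$ lying in $\flowpoly\subseteq P$ is extreme in $\flowpoly$.

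\textbf{Step 2 (fibers).} This is the main content of (ii). The key claim is the following:
\begin{quote}
If $F$ is any forest and $J$ is a feasible flow supported on one orientation of each edge of $F$, then $J$ is the unique such flow, and $J=\embedding[T]$ for every spanning tree $T\supseteq F$.
\end{quote}
Uniqueness follows from the same column independence used in Step 1. For the second part, $J$ is a feasible flow supported inside $T$ with no edge used in both directions, so it solves $\widetilde B X=\mu-\nu$ in the sense of \cref{rmk:canonical-embedding}. Since that solution is unique, $J=\embedding[T]$.

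Now let $K$ be a vertex with $K=\embedding[T_0]$ and support forest $F$. By the claim, every $T\supseteq F$ has $\embedding[T]=K$. Conversely, if $\embedding[T]=K$, then the support of $\embedding[T]$ is contained in $T$ by Step 1, so $F\subseteq T$. Thus the fiber is exactly $\{T\in\setoftrees: T\supseteq F\}$.

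The main obstacle is the bookkeeping in Step 2. One must check that zero coordinates impose no hidden constraint: edges of $T\setminus F$ simply carry zero in $X$. One must also confirm that the max-formula of \cref{def:canonical-embedding} reproduces the signed solution $X$ orientation-wise, which is the content of \cref{rmk:canonical-embedding}.
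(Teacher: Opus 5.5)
Your proposal is correct and follows essentially the same route as the paper. The paper proves (i) by running the basic-feasible-solution argument inline: a midpoint decomposition $K=\tfrac12(K_1+K_2)$ forces $K_1,K_2$ onto the oriented support of $K$, and injectivity of $B$ on an oriented forest is shown by leaf induction. For (ii) it uses the same convex-hull observation and the same uniqueness appeal to \cref{rmk:canonical-embedding} for the fiber; the only difference is that you cite the standard extreme-point characterization of $\{J\ge 0: BJ=\mu-\nu\}$ rather than reproving it, which also shows $\embedding[T]$ is extreme in the full flow polyhedron, not just in $\flowpoly(\mu,\nu)$.
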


    \begin{proof}
        For claim \emph{(i)}, fix a spanning tree $T\in\setoftrees$. Suppose $K=\embedding[T]$ is not a vertex in $\flowpoly(\mu,\nu)$. Then there exist $K_1,K_2\in\flowpoly(\mu,\nu)$ with $K_1\neq K_2$ and
            \begin{align*}
                K = \frac{1}{2}(K_1+K_2).
            \end{align*}
        In particular, $K_1,K_2\ge 0$, and thus $K(e)=0$ implies $K_1(e)=K_2(e)=0$. Let
            \begin{align*}
                \overrightarrow{T} := \{(x,y)\in E' : K(x,y) > 0\}.
            \end{align*}
        By~\cref{eq:canonical-embedding}, $\overrightarrow{T}$ is an oriented forest contained in $T$, and each of $K,K_1,K_2$ are feasible flows with supports contained in $\overrightarrow{T}$. 

        Consider the vector subspace $A\subseteq\mathbb{R}^{E'}$ given by
            \begin{align*}
                A &:= \mathrm{span} \left\{ \mathbf{1}_{(x,y)} \,:\, (x,y)\in\overrightarrow{T} \right\},
            \end{align*}
        where $\mathbf{1}_{(x,y)}$ is the standard basis vector in $\mathbb{R}^{E'}$ corresponding to the directed edge $(x,y)$. We claim that the restriction of $B$ to $A$, i.e., the linear map $B|_{A}:A\to\mathbb{R}^{V}$ given by $J\mapsto BJ$, is injective. 

        We argue by induction on $m:=|\overrightarrow{T}|$. For the base case $m=0$, we have $A=\{0\}$, so $B|_A$ is injective. For the induction step, assume $m\ge 1$ and that the claim holds for every oriented forest with fewer than $m$ edges. Let $J\in A$ satisfy $BJ=0$. Since the underlying graph of $\overrightarrow{T}$ is a nonempty forest, it contains a leaf $v\in V$ incident to a unique oriented edge $e_0\in\overrightarrow{T}$. Because $J$ vanishes outside $\overrightarrow{T}$, the $v$-th coordinate of $BJ$ is $\pm J(e_0)$, where the sign depends on the orientation of $e_0$. Hence $J(e_0)=0$. Let
            \begin{align*}
                \overrightarrow{T}_1 := \overrightarrow{T}\setminus\{e_0\}
            \end{align*}
        and set
            \begin{align*}
                A_1 := \mathrm{span}\left\{\mathbf{1}_{(x,y)}:\,(x,y)\in\overrightarrow{T}_1\right\}.
            \end{align*}
        Then $J\in A_1$, and $\overrightarrow{T}_1$ is again an oriented forest with $m-1$ edges. By the induction hypothesis, the restriction of $B$ to $A_1$ is injective, so $BJ=0$ implies $J=0$. This proves that $B|_A$ is injective.

        Since $K,K_1,K_2\in A$ and each is a feasible flow, we have
            \begin{align*}
                BK = BK_1 = BK_2 = \mu-\nu.
            \end{align*}
        Injectivity of $B|_A$ therefore gives $K=K_1=K_2$, contradicting $K_1\neq K_2$. This shows $\embedding[T]$ is a vertex of the polytope $\flowpoly(\mu,\nu)$, proving claim \emph{(i)}.

        For claim \emph{(ii)}, since $\flowpoly(\mu,\nu)$ is the convex hull of $\{\embedding[T]:T\in\setoftrees\}$, a vertex $K$ of $\flowpoly(\mu,\nu)$ is necessarily one of these points and hence $K=\embedding[T]$ for some spanning tree $T\in\setoftrees$. Let $F = \{\{x,y\}\in E : K(x,y) > 0\}$. By~\cref{eq:canonical-embedding}, the set $F$ is contained in $T$, hence $F$ is a forest.

        We next show that
            \begin{align*}
                \embedding^{-1}[K] = \{S\in\setoftrees : S\supseteq F\}.
            \end{align*}
        First, if $\embedding[S]=K$, then every positive coordinate of $K$ is supported on an edge of $S$, so necessarily $S\supseteq F$.

        Conversely, let $S\in\setoftrees$ satisfy $S\supseteq F$. Orient the edges of $S$ so that each edge of $F$ points in the direction where $K$ is positive, and orient the remaining edges arbitrarily. Now assign each edge of $F$ the corresponding value of $K$, and assign each edge of $S\setminus F$ the value $0$. This is just the flow $K$ written in the chosen orientation, so it is feasible and its positive part is exactly $K$. By the uniqueness statement in~\cref{rmk:canonical-embedding}, this is the signed flow associated with $S$, and therefore $\embedding[S]=K$. This proves the reverse direction and completes the proof of claim \emph{(ii)}.
    \end{proof}

\section{Properties of the tree polytope}\label{sec:connectivity-minimizers}

    In this section, we use the tree polytope to prove that the global minimizers of the function $T\mapsto \wass_T(\mu,\nu)$ form a connected set in the state graph $\stategraph$. To do so, we state and prove a lemma about the connectivity structure of the state graph.

    \begin{proposition}\label{prop:connected-fibers}
        Let $G=(V,E)$ be a connected graph and let $F\subseteq E$ be a forest in $G$. Let
            \begin{align*}
                \setoftrees[F] &= \{T\in\setoftrees : T\supseteq F\}.
            \end{align*}
        Then the induced subgraph $\stategraph[\setoftrees[F]]$ of the spanning tree state graph $\stategraph$ is connected.
    \end{proposition}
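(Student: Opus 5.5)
We will show that any two trees $S,T\in\setoftrees[F]$ are joined by a path in $\stategraph$ that never leaves $\setoftrees[F]$, using the classical symmetric exchange argument for bases of a matroid. We argue by induction on $k:=|T\setminus S|$, which equals $|S\setminus T|$ because all spanning trees have $|V|-1$ edges. If $k=0$ then $S=T$ and there is nothing to prove. If $k\ge 1$, it suffices to produce a tree $T_1\in\setoftrees[F]$ with $T_1\sim T$ in $\stategraph$ and $|T_1\setminus S|=k-1$; the induction hypothesis then joins $T_1$ to $S$ inside $\setoftrees[F]$.

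To construct $T_1$, pick an edge $e\in S\setminus T$. Adding $e$ to $T$ creates a unique simple cycle $C\subseteq T\cup\{e\}$. Since $S$ is acyclic, $C\not\subseteq S$, so there is an edge $f\in C\setminus S$. Necessarily $f\neq e$, so $f\in T\setminus S$. Set $T_1:=(T\cup\{e\})\setminus\{f\}$. Deleting an edge of the unique cycle $C$ from the connected graph $T\cup\{e\}$ leaves a connected acyclic graph on $V$, so $T_1$ is a spanning tree. It is obtained from $T$ by exactly one Glauber move (add $e$, delete $f$ from the created cycle), so $T_1\sim T$ in $\stategraph$. Moreover $T_1\setminus S=(T\setminus S)\setminus\{f\}$, which has $k-1$ elements.

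It remains to check $T_1\supseteq F$, and this is the only step where the constraint enters. Since $F\subseteq S$ and $f\notin S$, we have $f\notin F$. Hence removing $f$ deletes no edge of $F$, and $F\subseteq T\subseteq T_1\cup\{f\}$ gives $F\subseteq T_1$.

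The main point to get right is therefore the choice of the deleted edge: it must be taken from $C\setminus S$ rather than from $C\setminus F$. This choice simultaneously preserves $F$ and decreases the distance to $S$. The existence of such an edge follows from the acyclicity of $S$. With this choice the induction closes, and $\stategraph[\setoftrees[F]]$ is connected.
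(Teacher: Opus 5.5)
Your proof is correct and is essentially the paper's argument: both walk greedily from one tree toward the target tree by single edge exchanges, and both observe that the deleted edge lies outside the target tree, hence outside $F$. The only difference is cosmetic: the paper uses the cut form of the exchange (delete $e\in T\setminus T'$, then add an edge of $T'$ crossing the resulting cut), while you use the cycle form (add $e\in S\setminus T$, then delete an edge of the fundamental cycle lying outside $S$), which has the small advantage of matching the Glauber move verbatim.
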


    \begin{proof}
        If $|F|=|V|-1$, then $F$ is already a spanning tree and $\setoftrees[F]=\{F\}$, so the claim is trivial. Assume $|F|=k<|V|-1$. We will show that for any $T,T'\in\setoftrees[F]$, there is a path in $\stategraph[\setoftrees[F]]$ connecting $T$ and $T'$.

        If $T=T'$, there is nothing to prove. Otherwise choose an edge $e\in T\setminus T'$. Set $F_1 := T\setminus\{e\}$. Then $F_1$ is a forest with two connected components on vertex sets $A_1$ and $B_1$. Since $T'$ is connected, it must contain at least one edge in the cut $(A_1,B_1)$. Because $e$ is the unique edge of $T$ crossing this cut, any such edge lies in $T'\setminus T$. Choose one such edge $f\in E(A_1,B_1)\cap T'$. Then
            \begin{align*}
                T_1 := T\setminus\{e\}\cup\{f\}
            \end{align*}
        is a spanning tree, and we note $T_1$ still contains $F$ because $e\notin F$. 
        Moreover, since $e\notin T'$ and $f\in T'$, we have $T_1\cap T' = (T\cap T')\cup\{f\}$, and therefore $|T_1\cap T'| = |T\cap T'|+1$. Repeating this process increases the number of edges shared with $T'$ by one at each step. Since a spanning tree has only $|V|-1$ edges, after finitely many steps we must arrive at $T'$, producing a path from $T$ to $T'$ in $\stategraph[\setoftrees[F]]$.
    \end{proof}

    Next, we show that edges in $\flowpoly(\mu,\nu)$ are lifted in a straightforward way to links in $\stategraph$, as follows.

    \begin{lemma}\label{lem:adjacency-in-K}
        Let $G=(V,E)$ be a connected graph and let $\mu,\nu\in\prob(V)$ be fixed. Let $K_1, K_2\in\flowpoly(\mu,\nu)$ be vertices such that $\{K_1, K_2\}$ is an edge ($1$-face) of $\flowpoly(\mu,\nu)$. If $T \in\setoftrees$ satisfies $\embedding[T] = K_1$, then there exists $T'\in\setoftrees$ such that $\embedding[T'] = K_2$ and such that $T$ and $T'$ are adjacent in the state graph $\stategraph$.
    \end{lemma}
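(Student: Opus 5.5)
The approach is to reduce everything to one structural fact about polytope edges: if $\{K_1,K_2\}$ is a $1$-face of $\flowpoly(\mu,\nu)$, then the support forests satisfy $F_2\subseteq F_1\cup\{f\}$ for a single edge $f\in E$. Here $F_i$ denotes the undirected support of $K_i$, which is a forest by \cref{prop:vertices-of-polytope}(ii). Granting this fact, the lemma follows by a fundamental-cycle exchange, which I describe first.

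Fix $T$ with $\embedding[T]=K_1$. By \cref{prop:vertices-of-polytope}(ii), $T\supseteq F_1$. If $f\in T$, then $T\supseteq F_1\cup\{f\}\supseteq F_2$. So $\embedding[T]=K_2$ by the fiber description in \cref{prop:vertices-of-polytope}(ii), which contradicts $K_1\neq K_2$. Hence $f\notin T$, and $T\cup\{f\}$ contains a unique cycle $C_T$. Now $F_2$ is a forest contained in $T\cup\{f\}$, so some edge $e\in C_T$ lies outside $F_2$; note $e\neq f$, since $f\in F_2$ (otherwise $F_2\subseteq F_1\subseteq T$, giving the same contradiction). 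Set $T':=T\setminus\{e\}\cup\{f\}$. This is a spanning tree adjacent to $T$ in $\stategraph$, and $T'\supseteq (F_1\cup\{f\})\setminus\{e\}\supseteq F_2$. By \cref{prop:vertices-of-polytope}(ii) again, $\embedding[T']=K_2$.

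The main obstacle is the structural fact, and I would prove it as follows. The difference $D:=K_2-K_1$ satisfies $BD=0$ and is supported on $F_1\cup F_2$, so $D$ is a circulation (after cancelling opposite orientations). I would first show that its undirected support contains exactly one cycle $C$ and that $D$ is a multiple of the signed indicator of $C$.

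To prove this, suppose the support of $D$ decomposes as a nonnegative combination $D=\sum_j \lambda_j \chi_{C_j}$ of two or more conformal cycle circulations. I would then push $K_1$ along a single $C_j$ until some coordinate hits zero. The resulting flow is tree-supported: its support is $F_1\cup C_j$ minus an edge, which is acyclic because $F_1$ is a forest and only the cycle $C_j$ was added. By the argument in the proof of \cref{prop:vertices-of-polytope}(i) together with (ii), such a flow equals $\embedding[S]$ for any tree $S$ containing its support, so it is a vertex of $\flowpoly(\mu,\nu)$. Using these vertices, I would write a point in the relative interior of $[K_1,K_2]$ as a convex combination involving a vertex off the line through $K_1,K_2$. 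That contradicts $[K_1,K_2]$ being a face: take a linear functional minimized on $\flowpoly(\mu,\nu)$ exactly at $[K_1,K_2]$, and observe that it must also be minimized at that extra vertex.

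Once $D=t\,\chi_C$ is established, I would show $|C\setminus F_1|=1$ by the same device. If $C$ had two edges $f,g\notin F_1$, then $F_1\cup C$ would give intermediate tree flows, obtained by augmenting along $C$ while letting edges of $F_1\cap C$ drop out. Along the segment these intermediate flows would force some coordinate outside $F_1\cup F_2$ to be positive, or would produce a vertex not on $[K_1,K_2]$, again contradicting the $1$-face property. With $C\setminus F_1=\{f\}$, we get $F_2\subseteq F_1\cup C\subseteq F_1\cup\{f\}$, which is exactly the fact used in the exchange step above. I expect the bookkeeping in this last step to be the delicate part of the proof.
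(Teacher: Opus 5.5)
There is a genuine gap, and it cannot be closed. Your structural fact $F_2\subseteq F_1\cup\{f\}$ is false, and so is the lemma as stated.

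\textbf{Counterexample.} Take the diamond graph $V=\{a,b,c,d\}$, $E=\{ab,bc,cd,da,bd\}$, with $\mu$ the point mass at $a$ and $\nu$ the point mass at $c$. Every $\embedding[T]$ is the unit flow along the $T$-path from $a$ to $c$, so $\flowpoly(\mu,\nu)$ has exactly four vertices:
\[
K_1=\mathbf{1}_{(a,b)}+\mathbf{1}_{(b,c)},\quad K_2=\mathbf{1}_{(a,d)}+\mathbf{1}_{(d,c)},\quad K_3=\mathbf{1}_{(a,b)}+\mathbf{1}_{(b,d)}+\mathbf{1}_{(d,c)},\quad K_4=\mathbf{1}_{(a,d)}+\mathbf{1}_{(d,b)}+\mathbf{1}_{(b,c)}.
\]
The coordinate $(b,d)$ occurs only in $K_3$ and $(d,b)$ only in $K_4$, so these vectors are linearly independent. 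Hence $\flowpoly(\mu,\nu)$ is a $3$-simplex, and every pair of vertices spans a $1$-face.

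For the edge $\{K_1,K_2\}$ we have $F_1=\{ab,bc\}$ and $F_2=\{ad,dc\}$. Your single-cycle claim holds here, since $K_2-K_1$ is one $4$-cycle circulation. But $C\setminus F_1=\{cd,da\}$, so the step $|C\setminus F_1|=1$ is exactly what fails. Augmenting $K_1$ along $C$ passes through no tree flow other than $K_1$ and $K_2$, because both edges of $F_1\cap C$ drop out at once; so there is nothing to contradict.

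Worse, $T=\{ab,bc,bd\}$ has $\embedding[T]=K_1$, since $bd$ carries zero flow. Any $T'$ with $\embedding[T']=K_2$ must contain both $ad$ and $dc$, and neither lies in $T$. So no neighbor of $T$ is in the fiber of $K_2$. Your exchange step is correct: for a given $T$ it shows the conclusion holds if and only if $|F_2\setminus T|\le 1$. The trouble is that zero-flow edges of $T$ can force $|F_2\setminus T|\ge 2$ even when $\{K_1,K_2\}$ is an edge.

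\textbf{Comparison with the paper, and what survives.} The paper's proof breaks at the same point. In its last case ($f\in C\cap F_2$, $f\neq e$) it tacitly assumes $f\in T\cup\{e\}$, which fails for $f=dc$ (or $f=ad$) above.

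Your single-cycle step also fails for other $1$-faces. For $\{K_3,K_4\}$, ``cancelling opposite orientations'' is not legitimate, because $(b,d)$ and $(d,b)$ are distinct coordinates of $\mathbb{R}^{E'}$. In net-flow coordinates the midpoints of $[K_3,K_4]$ and $[K_1,K_2]$ coincide, but in $\mathbb{R}^{E'}$ they do not. After cancellation, $K_4-K_3$ is the sum of the triangle circulations $a\to d\to b\to a$ and $d\to b\to c\to d$. Moreover, the fibers $\embedding^{-1}[K_3]=\{\{ab,bd,dc\}\}$ and $\embedding^{-1}[K_4]=\{\{ad,db,bc\}\}$ are not adjacent in $\stategraph$ at all, so even an existence version of the lemma fails for this $1$-face. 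This is also where the paper's ``unique cycle'' step breaks: $F_3\cup F_4$ is the whole diamond, and a nonnegative feasible perturbation of $\overline K$ need not lie in $\flowpoly(\mu,\nu)$.

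What can be proved, with your exchange step as the final move, is a weaker statement about edges of the flow polyhedron $P=\{J\ge 0: BJ=\mu-\nu\}$ rather than of $\flowpoly(\mu,\nu)$:
\begin{enumerate}[label=(\alph*)]
\item If $[K_1,K_2]$ is an edge of $P$, the minimal face of $P$ containing $\tfrac12(K_1+K_2)$ is that bounded edge. So its support has no directed cycle, and $F_1\cup F_2$ has a unique cycle $C$.
\item Choose $f\in C\setminus F_1$ and extend $(F_1\cup F_2)\setminus\{f\}$ to a spanning tree $T$. Then $F_2\subseteq T\cup\{f\}$, and your exchange produces a neighbor of \emph{this} $T$ in the fiber of $K_2$. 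It does not do so for every $T$ in the fiber of $K_1$.
\end{enumerate}
Combined with \cref{prop:connected-fibers}, this weaker statement suffices for \cref{thm:connectivity-minimizers} and \cref{thm:no-isolated-basins}. It does not rescue \cref{lem:descent-step}, which is itself false. Subdivide $bc$ in the diamond by a vertex $b'$; then $T=\{ab,bb',b'c,bd\}$ has cost $3>2=\wass(\mu,\nu)$, yet no neighbor of $T$ has smaller cost.
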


    \begin{proof}
        Let $K_1, K_2\in\flowpoly(\mu,\nu)$ be vertices such that $\{K_1,K_2\}$ is an edge of $\flowpoly(\mu,\nu)$, and let $T\in\setoftrees$ satisfy $\embedding[T]=K_1$. By~\cref{prop:vertices-of-polytope}(ii), there exist forests $F_1,F_2\subseteq E$, obtained as the supports of $K_1$ and $K_2$, respectively, and with the property that
            \begin{align}\label{eq:fiber-description-adjacency}
                \embedding^{-1}[K_i] = \{S\in\setoftrees: S\supseteq F_i\},\text{ for }\quad i=1,2.
            \end{align}
        In particular, $T\supseteq F_1$. Since $\{K_1,K_2\}$ is an edge, we have $K_1\neq K_2$. Hence the fibers in~\cref{eq:fiber-description-adjacency} are disjoint and in particular there is no spanning tree containing both $F_1$ and $F_2$, so the union $F = F_1\cup F_2$ cannot be a forest. Thus, by passing to a simple subcycle by deleting repeated vertices as needed, $F$ contains at least one simple cycle. We claim that this cycle is unique. Suppose $F$ contains two distinct simple cycles. Let $\overline{K} := \tfrac{1}{2}(K_1+K_2)$ and set
            \begin{align*}
                F^+ := \{(x,y)\in E' : \overline{K}(x,y) > 0\}.
            \end{align*}
        Then $\overline{K}$ is strictly positive on $F^+$ and satisfies $B_{F^+}\overline{K}^{(F^+)} = \mu-\nu$, where $\overline{K}^{(F^+)}$ denotes the restriction of $\overline{K}$ to the coordinates indexed by $F^+$, and where $B_{F^+}$ denotes the restriction of $B$ to the columns indexed by $F^+$. The underlying undirected graph of $F^+$ contains $F$, so if $F$ has two distinct cycles then $\dim\ker(B_{F^+})\ge 2$. Choose linearly independent $h_1,h_2\in\ker(B_{F^+})$. For $|\varepsilon_1|,|\varepsilon_2|$ sufficiently small, the vector
            \begin{align*}
                \widetilde{K} = \overline{K} + \varepsilon_1 h_1 + \varepsilon_2 h_2
            \end{align*}
        remains nonnegative and satisfies $B\widetilde{K} = \mu-\nu$, so $\widetilde{K}\in\flowpoly(\mu,\nu)$. This produces a $2$-dimensional face through $\overline{K}$, contradicting that $\{K_1,K_2\}$ is an edge. Therefore $F$ contains a unique simple cycle, say $C\subseteq F$.
        
        Next we claim that $F_1$ and $F_2$ differ only on $C$; specifically,
            \begin{align*}
                F_1\cap(F\setminus C) = F_2\cap(F\setminus C) = F\setminus C.
            \end{align*}
        To see this, let $g\in F\setminus C$ be an edge off the cycle. We claim that $g\in F_1\cap F_2$. Clearly $g\in F_1$ or $g\in F_2$ holds automatically, so assume without loss of generality that $g\in F_1$. If $g\notin F_2$, then $K_2$ vanishes on both orientations of $g$ while $K_1$ is positive on exactly one orientation $\overrightarrow{g}$ of $g$, so $K_1-K_2$ is nonzero on $\overrightarrow{g}$. Since $B(K_1-K_2)=0$, $F$ contains a unique cycle, so the support of $K_1-K_2$ must contain $C$. This is impossible because $g\notin C$. Therefore $g\in F_2$, and we conclude that $F_1\cap(F\setminus C) = F_2\cap(F\setminus C) = F\setminus C$.

        We claim that $C$ contains at least one edge $e\in F_2$ such that $e\notin T$. Indeed, if no such edge existed, then $C\subseteq F_1\subseteq T$, contradicting that $T$ is a tree. Therefore $C$ must contain an edge avoiding $T$. Since $T$ contains $F_1$ this edge must lie in $F_2$. Fix such an edge $e$. Adding $e$ to $T$ creates a unique simple cycle; denote it by $C_T(e)\subseteq T\cup\{e\}$. Indeed, $C_T(e)$ is exactly the edge $e$ together with the unique simple path in $T$ joining the endpoints of $e$. Every edge of $C_T(e)$ other than $e$ lies in $T$, and $C_T(e)$ must contain at least one edge of $T$ that is not in $F_2$, otherwise $C_T(e)\subseteq (T\cup\{e\})\cap F_2$ would contradict that $F_2$ is a forest. Fix an edge $e'\in C_T(e) \cap (T\setminus F_2)$. Define
            \begin{align*}
                T' \;:=\; (T\cup\{e\})\setminus\{e'\}.
            \end{align*}
        Clearly $T'\in\setoftrees$, and by construction, $T\sim T'$ in $\stategraph$. By~\eqref{eq:fiber-description-adjacency}, it remains to show that $T'\supseteq F_2$. Let $f\in F_2$ be arbitrary. If $f=e$, then $f\in T'$ by construction. Thus we may assume $f\neq e$ and consider two cases.
        
        First, if $f\notin C$, then $f\in F_2\cap(F\setminus C)=F\setminus C$ by construction, hence $f\in F_1$ and therefore $f\in T$ (since $T\supseteq F_1$). Moreover, to produce $T'$ we removed only $e'\in C$, so $f$ remains in $T'$. Thus $f\in T'$.
        
        Second, if $f\in C$, then the only edge of $C$ removed in passing from $T\cup\{e\}$ to $T'$ is $e'$, and by construction $e'\notin F_2$. But $f\in F_2$ so $f\neq e'$ and hence $f\in T'$.

        In both cases we have shown $f\in T'$. Since $f\in F_2$ was arbitrarily chosen, this proves $T'\supseteq F_2$. By~\cref{prop:vertices-of-polytope}(ii), it follows that $\embedding[T']=K_2$. Therefore we have proved that $T\sim T'$ in $\stategraph$, $\embedding[T]=K_1$, and $\embedding[T']=K_2$ as desired.
    \end{proof}

    Finally, using the preceding setup, we will show that the minimizers of the transport function are connected.

    \begin{theorem}\label{thm:connectivity-minimizers}
        Let $G=(V,E)$ be a connected graph and let $\mu,\nu\in\prob(V)$ be fixed. Consider the minimum level set $X = \{T\in\setoftrees : \wass_{T}(\mu,\nu) = \wass(\mu,\nu)\}$ of the transport function $T\mapsto \wass_{T}(\mu,\nu)$. Then the induced subgraph $\stategraph[X]$ of the spanning tree state graph $\stategraph$ is connected.
    \end{theorem}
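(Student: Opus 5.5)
The plan is to move the problem onto the tree polytope. There the set of optimal flows is a face, and faces of a polytope have connected $1$-skeleta; those edges are then lifted back to $\stategraph$ using \cref{lem:adjacency-in-K} and \cref{prop:connected-fibers}.

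\emph{Step 1: linear cost.} Let $\ell(J):=\sum_{e\in E'}J(e)$, a linear functional on $\mathbb{R}^{E'}$. By \cref{lem:transport-on-trees-formula}, the flow $\embedding[T]$ is the unique optimal flow on $T$, so $\ell(\embedding[T])=\wass_T(\mu,\nu)$ for every $T\in\setoftrees$. By \cref{eq:tree-decomposition}, $\min_T \ell(\embedding[T])=\wass(\mu,\nu)$. Since $\ell$ is linear, its minimum over $\flowpoly(\mu,\nu)$ is attained at vertices and equals $\wass(\mu,\nu)$. Hence
\[
\Phi:=\{K\in\flowpoly(\mu,\nu): \ell(K)=\wass(\mu,\nu)\}
\]
is a face of $\flowpoly(\mu,\nu)$. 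Its vertices are exactly the vertices $K$ of $\flowpoly(\mu,\nu)$ with $\ell(K)=\wass(\mu,\nu)$. By \cref{prop:vertices-of-polytope}, $X=\bigcup_{K}\embedding^{-1}[K]$, where $K$ ranges over the vertices of $\Phi$.

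\emph{Step 2: polytope connectivity.} I will use the standard fact that the graph of a polytope is connected, applied to the face $\Phi$, which is itself a polytope. The edges of $\Phi$ are edges of $\flowpoly(\mu,\nu)$. Therefore any two vertices $K,K'$ of $\Phi$ are joined by a sequence $K=K_0,K_1,\dots,K_r=K'$ of vertices of $\Phi$ with each $\{K_{i-1},K_i\}$ an edge of $\flowpoly(\mu,\nu)$.

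\emph{Step 3: lifting.} Take $T,T'\in X$ with $\embedding[T]=K$ and $\embedding[T']=K'$. Starting from $T_0=T$, apply \cref{lem:adjacency-in-K} repeatedly. This gives trees $T_i\sim T_{i-1}$ in $\stategraph$ with $\embedding[T_i]=K_i$, and each $T_i\in X$ because $\ell(K_i)=\wass(\mu,\nu)$. We reach some tree $T_r$ with $\embedding[T_r]=K'$, but $T_r$ need not equal $T'$. By \cref{prop:vertices-of-polytope}(ii), the fiber $\embedding^{-1}[K']$ is $\setoftrees[F']$, where $F'$ is the support forest of $K'$. By \cref{prop:connected-fibers}, $\stategraph[\setoftrees[F']]$ is connected, and this fiber lies inside $X$. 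So $T_r$ and $T'$ are joined within $X$, and concatenating the two paths gives a path from $T$ to $T'$ in $\stategraph[X]$. The case $K=K'$ is covered by the fiber argument alone.

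The main obstacle is Step 2, which rests on connectivity of the $1$-skeleton of a polytope (Balinski-type, or via the simplex method). The proof should either cite this or sketch it quickly: for a generic linear functional, every non-minimizing vertex of $\Phi$ has an improving edge. I also need to confirm that an edge of the face $\Phi$ is a $1$-face of $\flowpoly(\mu,\nu)$, which holds because a face of a face is a face.
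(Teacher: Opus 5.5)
Your outline is the paper's proof almost verbatim: optimal face, connected $1$-skeleton, edge-by-edge lifting with \cref{lem:adjacency-in-K}, and a final correction inside a fiber via \cref{prop:connected-fibers}. Getting faceness from the supporting hyperplane $\mathbf{1}^\top K=\wass(\mu,\nu)$ instead of the extreme-subset criterion is only cosmetic.

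\textbf{The gap is in Step~3, not Step~2.} \cref{lem:adjacency-in-K} is false in the form you (and the paper) use it, namely that \emph{every} $T$ in the fiber of $K_{i-1}$ has a neighbor in the fiber of $K_i$. Take vertices $s,a,b,c,t$, edges $sa,at,sb,bt,bc,cs$, $\mu=\delta_s$ and $\nu=\delta_t$. The optimal face is the edge $[K_1,K_2]$ with $K_1=\mathbf{1}_{(s,a)}+\mathbf{1}_{(a,t)}$ and $K_2=\mathbf{1}_{(s,b)}+\mathbf{1}_{(b,t)}$; the only other vertex is the cost-$3$ flow along $s,c,b,t$. The tree $T=\{sa,at,bc,cs\}$ lies in $X$ with $\embedding[T]=K_1$. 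Every tree in $\embedding^{-1}[K_2]$ has the form $\{sb,bt,x,y\}$ with $x\in\{sa,at\}$ and $y\in\{bc,cs\}$, so it shares only two edges with $T$, while adjacency needs three. Your first lifting step therefore has no output. In the lemma's own proof, the case ``$f\in C$'' tacitly assumes $C\subseteq T\cup\{e\}$.

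The lemma's unique-cycle step is also unjustified for general edges of $\flowpoly$, because a nonnegative feasible perturbation of $\overline{K}$ need not lie in $\flowpoly$. For example, on $K_{2,3}$ with $\mu$ uniform on the two-vertex side and $\nu$ a point mass on the other side, there is an edge of $\flowpoly$ whose two supports together cover all six edges, and no tree in one fiber is adjacent to any tree in the other.

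\textbf{How to repair it.} The theorem survives, and you can keep your architecture, if you lift each edge from a \emph{well-chosen} pair of trees and glue the lifts inside fibers.
\begin{enumerate}[label=(\arabic*)]
\item Let $P:=\{J\in\mathbb{R}^{E'}: J\ge 0,\ BJ=\mu-\nu\}$. Every nonzero recession direction $r$ of $P$ has $\mathbf{1}^\top r>0$, so the face $P\cap\{\mathbf{1}^\top J=\wass(\mu,\nu)\}$ is a polytope. Its vertices are basic feasible solutions, i.e.\ tree flows, so it equals your $\Phi$. Hence every edge $[K_1,K_2]$ of $\Phi$ is an edge of $P$, not merely of $\flowpoly$.
\item Let $S$ be the oriented support of $K_1+K_2$. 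The minimal face of $P$ containing the midpoint is both this edge and $\{J\in P:\mathrm{supp}\,J\subseteq S\}$, whose dimension is $\dim\ker B_S$. So $\dim\ker B_S=1$. Since $F_1\cup F_2$ is not a forest (else $K_1=K_2$), it is unicyclic with some cycle $C$.
\item Pick $e_1\in C\setminus F_1$ and $e_2\in C\setminus F_2$; they are distinct because $C\subseteq F_1\cup F_2$. Extend the forest $(F_1\cup F_2)\setminus\{e_1\}$ to a spanning tree $T_1$. The fundamental cycle of $e_1$ in $T_1$ is $C$, so $T_2:=(T_1\cup\{e_1\})\setminus\{e_2\}$ is a spanning tree with $T_1\sim T_2$.
\item We have $T_1\supseteq F_1$ and $T_2\supseteq (F_1\cup F_2)\setminus\{e_2\}\supseteq F_2$. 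By \cref{prop:vertices-of-polytope}(ii), $\embedding[T_1]=K_1$ and $\embedding[T_2]=K_2$.
\item Along the skeleton path $K_0,\dots,K_r$, join the tree arriving at each intermediate $K_i$ to the tree leaving it inside $\embedding^{-1}[K_i]\subseteq X$, using \cref{prop:connected-fibers}.
\end{enumerate}
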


    \begin{proof}
        Consider the $(\mu,\nu)$-tree polytope $\flowpoly(\mu,\nu)$, and let
            \begin{align*}
                \flowpoly_\star &= \flowpoly(\mu,\nu) \cap \{K\in\mathbb{R}^{E'} : \mathbf{1}^\top K = \wass(\mu,\nu)\}.
            \end{align*}
        We claim that $\flowpoly_\star$ is a face of $\flowpoly(\mu,\nu)$. Recall a basic fact from polytopal geometry: if $F$ is a subset of a convex polytope $P\subseteq\mathbb{R}^{d}$, then $F$ is a face of $P$ if and only if it is convex and satisfies the following condition: if $x\in F$ and $x = \theta y + (1-\theta) z$ with $y, z\in P$ and $0\le \theta\le 1$, then $y, z\in F$. To this end, let $K\in\flowpoly_\star$, $K_1, K_2\in\flowpoly(\mu,\nu)$, and $0 \le \theta \le 1$ be fixed, and assume $K = \theta K_1 + (1-\theta) K_2$. Then
            \begin{align*}
                \mathbf{1}^\top K &= \theta \mathbf{1}^\top K_1 + (1-\theta) \mathbf{1}^\top K_2.
            \end{align*}
        Since $\flowpoly(\mu,\nu)$ is the convex hull of $\{\embedding[T]:T\in\setoftrees\}$, we have $\mathbf{1}^\top K_i \ge \wass(\mu,\nu)$ for $i=1,2$. Together with $\mathbf{1}^\top K = \wass(\mu,\nu)$ and the fact that $K(e)\geq 0$ for each $e\in E'$, it follows that $\mathbf{1}^\top K_i = \wass(\mu,\nu)$ for $i=1,2$, and hence that $K_i\in\flowpoly_\star$. This proves that $\flowpoly_\star$ is a face of $\flowpoly(\mu,\nu)$.

        Now take two trees $S, T\in \setoftrees$ achieving the optimal transport cost between $\mu,\nu$, i.e., $S,T\in X$. We claim that there exists a path in $\stategraph[X]$ connecting them. Clearly $\embedding[S], \embedding[T]\in \flowpoly_\star$. Since $\flowpoly_\star$ is a nonempty face of $\flowpoly(\mu,\nu)$, its $1$-skeleton is connected, so there exists a path
            \begin{align*}
                (K_0 = \embedding[S], K_1,\dots, K_{r-1}, K_{r} = \embedding[T])
            \end{align*}
        of vertices in $\flowpoly_\star$ connecting the flows $\embedding[S], \embedding[T]$. Since $\flowpoly_\star$ is a face of $\flowpoly(\mu,\nu)$, these are also vertices of $\flowpoly(\mu,\nu)$.

        Starting from $T_0 = S$, apply~\cref{lem:adjacency-in-K} to each edge $\{K_{i-1},K_i\}$ to obtain $T_i\in\setoftrees$ with $\embedding[T_i]=K_i$ and $T_{i-1}\sim T_i$ in $\stategraph$. Because $K_i\in\flowpoly_\star$, we have $\mathbf{1}^\top \embedding[T_i] = \wass(\mu,\nu)$, and hence $T_i\in X$. This yields a path
            \begin{align*}
                (T_0 = S, T_1, \dotsc, T_{r-1}, T_{r})
            \end{align*}
        in $\stategraph[X]$ with $\embedding[T_i]=K_i$ for each $0\le i\le r$.

        The endpoint $T_{r}$ need not equal $T$, but they belong to the same fiber of $\embedding$ since $\embedding[T]=\embedding[T_r]$. By~\cref{prop:connected-fibers}, this fiber is connected in $\stategraph$, so we may adjoin a path from $T_{r}$ to $T$. All vertices on this additional path share the same value of $\mathbf{1}^\top \embedding[\cdot]$, hence remain in $X$. The proof is completed.
    \end{proof}

\section{Properties of minimizers of the transport function}\label{sec:no-local-minima}

Before we proceed to show that the function $T\mapsto \wass_T(\mu,\nu)$ on the spanning tree state graph $\stategraph$ has no suboptimal local minima, we first state and prove a useful lemma.

\begin{lemma}[Edge descent at nonoptimal vertices]\label{lem:edge-descent-polytope}
    Let $K_0$ be a vertex of $\flowpoly(\mu,\nu)$ such that $\mathbf{1}^\top K_0 > \wass(\mu,\nu)$. Then there exists a vertex $K_1$ adjacent to $K_0$ in the $1$-skeleton of $\flowpoly(\mu,\nu)$ with
        \begin{align*}
            \mathbf{1}^\top K_1 < \mathbf{1}^\top K_0.
        \end{align*}
\end{lemma}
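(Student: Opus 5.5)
This is the standard fact that a linear functional on a polytope has no non-global local minima among vertices. I would prove it using the vertex cone at $K_0$.

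\textbf{Step 1: a minimizer exists on $\flowpoly(\mu,\nu)$.} The polytope $\flowpoly(\mu,\nu)$ is the convex hull of the finitely many points $\embedding[T]$. Hence the linear functional $\ell(K)=\mathbf{1}^\top K$ attains its minimum over $\flowpoly(\mu,\nu)$ at some vertex. By \cref{eq:tree-decomposition} and \cref{lem:transport-on-trees-formula}, $\mathbf{1}^\top\embedding[T]=\wass_T(\mu,\nu)$, so this minimum equals $\wass(\mu,\nu)$. Choose a tree $T^\star$ attaining it and set $K^\star=\embedding[T^\star]$. Then
\[
\ell(K^\star)=\wass(\mu,\nu)<\ell(K_0).
\]

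\textbf{Step 2: the vertex-cone fact.} For a vertex $K_0$ of a polytope $P$, let $N(K_0)$ be the set of vertices joined to $K_0$ by an edge of $P$. Then
\[
P\subseteq K_0+\mathrm{cone}\{K'-K_0 : K'\in N(K_0)\}.
\]
I would justify this by a standard argument. Since $K_0$ is a vertex, there is a hyperplane $H$ separating $K_0$ from all other vertices of $P$. The section $P\cap H$ is a polytope. Its vertices are exactly the intersections of $H$ with the edges of $P$ incident to $K_0$, because every face of $P\cap H$ arises from a face of $P$ containing $K_0$, and the $0$-dimensional ones come from edges at $K_0$. Every point $y\in P$ with $y\ne K_0$ lies on a ray from $K_0$ through some point of $P\cap H$. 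That point is a convex combination of the vertices of $P\cap H$, which gives the conic representation.

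This step is the main technical point. If the authors prefer, it can be cited as a standard result, for instance from Ziegler's \emph{Lectures on Polytopes}, in the form ``the vertex cone at $v$ is generated by the edge directions at $v$.''

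\textbf{Step 3: conclude.} Apply Step 2 to $K^\star$ to write
\[
K^\star-K_0=\sum_{K'\in N(K_0)}\lambda_{K'}(K'-K_0),\qquad \lambda_{K'}\ge 0.
\]
Applying $\ell$ gives
\[
0>\ell(K^\star)-\ell(K_0)=\sum_{K'\in N(K_0)}\lambda_{K'}\bigl(\ell(K')-\ell(K_0)\bigr).
\]
The left side is strictly negative and every $\lambda_{K'}$ is nonnegative. Therefore some term $\ell(K')-\ell(K_0)$ must be strictly negative. Taking $K_1$ to be that neighbor $K'$ proves the lemma.

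An alternative is to invoke the simplex-method optimality criterion: a vertex at which no incident edge decreases a linear objective is optimal. This is the same fact stated differently.
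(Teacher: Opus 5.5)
Your proposal is correct and follows the paper's proof essentially step for step. Both take an optimal $K_\star$ with $\mathbf{1}^\top K_\star=\wass(\mu,\nu)$, write $K_\star-K_0$ as a nonnegative combination of the edge directions at $K_0$, and read off an edge direction along which $\mathbf{1}^\top$ strictly decreases. The one difference is that you also sketch a proof of the vertex-cone fact via the vertex figure, whereas the paper simply cites it from Ziegler.
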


\begin{proof}
    Let $K_\star\in\flowpoly(\mu,\nu)$ be optimal for the linear objective $K\mapsto \mathbf{1}^\top K$, so that $\mathbf{1}^\top K_\star = \wass(\mu,\nu)$. Set $d:=K_\star-K_0$. Since $\flowpoly(\mu,\nu)$ is convex, we have
        \begin{align*}
            K_0+t d = (1-t)K_0+tK_\star \in \flowpoly(\mu,\nu),\quad t\in[0,1],
        \end{align*}
    so $d$ is a feasible direction at $K_0$.

    For each vertex $K_1',K_2',\dotsc, K_m'$ of $\flowpoly(\mu,\nu)$ adjacent to $K_0$ in its $1$-skeleton, let $d^{(i)}$ denote the edge direction $d^{(i)} = K_{i}'-K_0$, $i=1,2,\dotsc, m$. A standard polytope fact is that the feasible cone at a vertex is generated by its incident edge directions (see, e.g.,~\cite{ziegler1995lectures}), hence there are coefficients $\alpha_i\ge 0$ such that
        \begin{align*}
            d = \sum_{i=1}^m \alpha_i d^{(i)}.
        \end{align*}
    Taking objective inner products gives
        \begin{align*}
            0 > \mathbf{1}^\top d = \mathbf{1}^\top K_\star - \mathbf{1}^\top K_0 = \sum_{i=1}^m \alpha_i\,\mathbf{1}^\top d^{(i)}.
        \end{align*}
    Therefore $\mathbf{1}^\top d^{(i_0)}<0$ for some $i_0$. Let $K_1$ be the other endpoint of the corresponding edge through $K_0$; then $K_1 = K_0 + \tau d^{(i_0)}$ for some $\tau>0$, and
        \begin{align*}
            \mathbf{1}^\top K_1 = \mathbf{1}^\top K_0 + \tau\,\mathbf{1}^\top d^{(i_0)} < \mathbf{1}^\top K_0.
        \end{align*}
\end{proof}

The next result rules out isolated \emph{vertices} of the state graph $\stategraph$ where the transport function might have a strict suboptimal local minimum.

\begin{theorem}\label{thm:isolated-vertex-minimizers}
    Let $G=(V,E)$ be connected, $\mu,\nu\in\prob(V)$, and let $\setoftrees$ denote the set of spanning trees of $G$. Consider the function $f\in\mathbb{R}^{\setoftrees}$ defined by
        \begin{align*}
            f(T) &= \wass_T(\mu,\nu), \quad T\in\setoftrees.
        \end{align*}
    Let $T_0\in\setoftrees$ be fixed. Assume $T_0$ is a strict minimizer of $f$ on its neighborhood in $\stategraph$, i.e.,
        \begin{align}\label{eq:strict-local-min}
            f(S) > f(T_0)\text{ for each }S\sim T_0.
        \end{align}
    Then $T_0$ is a global minimizer of $f$, i.e.,
        \begin{align*}
            f(T_0) = \wass(\mu,\nu).
        \end{align*}
\end{theorem}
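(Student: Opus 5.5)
We argue by contradiction, combining the polytope descent lemma with the edge-lifting lemma. First note that for each $T\in\setoftrees$, $f(T)=\wass_T(\mu,\nu)=\mathbf{1}^\top\embedding[T]$. Indeed, by \cref{lem:transport-on-trees-formula} the canonical embedding $\embedding[T]$ is the unique optimal flow on the tree $T$, and by \cref{lem:flow-formulation} applied to $T$ its total mass equals $\wass_T(\mu,\nu)$. Thus $f$ factors through the embedding $\embedding$ followed by the linear functional $K\mapsto\mathbf{1}^\top K$ on $\flowpoly(\mu,\nu)$.

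Suppose $f(T_0)>\wass(\mu,\nu)$, and set $K_0:=\embedding[T_0]$. By \cref{prop:vertices-of-polytope}(i), $K_0$ is a vertex of $\flowpoly(\mu,\nu)$. We also have $\mathbf{1}^\top K_0=f(T_0)>\wass(\mu,\nu)$. The minimum of $\mathbf{1}^\top K$ over $\flowpoly(\mu,\nu)$ equals $\wass(\mu,\nu)$, by \cref{eq:tree-decomposition} and the fact that a linear functional is minimized at a vertex. Hence \cref{lem:edge-descent-polytope} applies and yields a vertex $K_1$ adjacent to $K_0$ in the $1$-skeleton with $\mathbf{1}^\top K_1<\mathbf{1}^\top K_0$.

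Next apply \cref{lem:adjacency-in-K} to the edge $\{K_0,K_1\}$ and the tree $T_0$, which satisfies $\embedding[T_0]=K_0$. This produces $T_1\in\setoftrees$ with $T_1\sim T_0$ in $\stategraph$ and $\embedding[T_1]=K_1$. Then $f(T_1)=\mathbf{1}^\top K_1<\mathbf{1}^\top K_0=f(T_0)$. This contradicts \eqref{eq:strict-local-min}, which requires $f(S)>f(T_0)$ for every neighbor $S$. Hence $f(T_0)=\wass(\mu,\nu)$.

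The heavy lifting is done by the earlier lemmas, so the only step needing care is the identification $f(T)=\mathbf{1}^\top\embedding[T]$ together with the hypotheses of \cref{lem:edge-descent-polytope}. In particular, the optimum of the linear program over $\flowpoly(\mu,\nu)$ must be exactly $\wass(\mu,\nu)$. This holds by \cref{eq:tree-decomposition}, since the minimum over vertices equals the minimum over $\{\embedding[T]\}$, which equals $\min_T\wass_T(\mu,\nu)$. The proof in fact only uses the weaker hypothesis $f(S)\ge f(T_0)$ for all neighbors, since the descent produced is strict.
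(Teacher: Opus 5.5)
Your argument is the paper's own route. It is the proof of \cref{thm:no-isolated-basins} specialized to $X=\{T_0\}$, and essentially the proof of \cref{lem:descent-step}. That route, however, has a genuine gap where \cref{lem:adjacency-in-K} is applied to the particular tree $T_0$. An edge $\{K_0,K_1\}$ of $\flowpoly(\mu,\nu)$ need not be realizable by a single exchange starting from a given tree in the fiber of $K_0$. The proof of \cref{lem:adjacency-in-K}, in its second case (an edge of $C\cap F_2$ other than $e$), silently assumes that edge already lies in $T$, and in general $C\cap F_2$ can have several edges outside $T$. Your closing remark exposes the problem: strictness is never used, so your argument would show that $f(S)\ge f(T_0)$ for all $S\sim T_0$ already forces optimality, and that is false. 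Take vertices $s,u_1,u_2,u_3,t,c$ and edges $su_1,u_1u_2,u_2u_3,u_3t,u_2c,sc,ct$. Let $\mu=\delta_s$, $\nu=\delta_t$ and $T_0=\{su_1,u_1u_2,u_2u_3,u_3t,u_2c\}$. Then $f(T_0)=4$, and all six neighbours of $T_0$ (insert $sc$ or $ct$, delete another edge of the resulting cycle) also have cost $4$, while $\wass(\mu,\nu)=2$. Here $\flowpoly(\mu,\nu)$ is the simplex spanned by the four $s$--$t$ path flows. So $\embedding[T_0]$ is indeed adjacent to the cheaper flow along $s,c,t$, as \cref{lem:edge-descent-polytope} promises. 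But that flow needs both $sc$ and $ct$, and no tree containing both is adjacent to $T_0$. The same example refutes \cref{lem:descent-step}.

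The strictness hypothesis is what repairs this, and it must be used. Suppose a tree edge $e$ on the fundamental cycle of a non-tree edge $g$ carried zero flow. Then $T_0-e+g$ would contain the support of $\embedding[T_0]$, so by \cref{prop:vertices-of-polytope}(ii) it would have the same embedding and cost, contradicting \eqref{eq:strict-local-min}. Hence every tree edge that is not a bridge of $G$ carries nonzero flow. Orient each tree edge along its flow (zero-flow edges arbitrarily), and let $\phi:V\to\mathbb{R}$ satisfy $\phi(u)-\phi(v)=1$ on each oriented tree edge. Then $\phi^\top(\mu-\nu)=\phi^\top B\,\embedding[T_0]=f(T_0)$. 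If $|\phi(a)-\phi(b)|\le 1$ on every edge of $G$, then every feasible $J$ satisfies $\mathbf{1}^\top J\ge \phi^\top BJ=f(T_0)$, so $f(T_0)=\wass(\mu,\nu)$ by \cref{lem:flow-formulation}. Otherwise some non-tree edge $g=\{a,b\}$ has $\phi(a)-\phi(b)\ge 2$. Send $\theta$ units from $a$ to $b$ along $g$ and back along the tree path. For small $\theta>0$ the cost changes at rate $1-(\phi(a)-\phi(b))\le -1$, because by the first step no tree edge of this cycle has zero flow. At the first $\theta>0$ at which some tree edge $e_i$ of the cycle is emptied, the resulting flow is the canonical flow of the neighbour $T_0-e_i+g\sim T_0$. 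Its cost is strictly below $f(T_0)$, contradicting strictness.
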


 
In fact, a stronger version of~\cref{thm:isolated-vertex-minimizers} holds with vertices replaced by ``basins,'' as defined in~\cref{def:basin}. 

\begin{theorem}\label{thm:no-isolated-basins}
    Let $G=(V,E)$ be a connected graph, $\mu,\nu\in\prob(V)$ be fixed, let $\setoftrees$ denote the set of spanning trees of $G$, and let $\stategraph$ denote the spanning tree state graph. Let $f:\setoftrees\rightarrow\mathbb{R}$ denote the transport function $f(T) = \wass_T(\mu,\nu)$. Let $X\subseteq\setoftrees$ be a basin of $f$ at level $c\in\mathbb{R}$. Then $c = \wass_G(\mu,\nu)$.
\end{theorem}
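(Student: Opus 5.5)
Argue by contradiction. Suppose $X$ is a basin of $f$ at level $c$ with $c>\wass_G(\mu,\nu)$; we will exhibit a tree in $\partial X$ with $f$-value at most $c$, contradicting condition (iii) of \cref{def:basin}. The argument rests on the observation that for every $T\in\setoftrees$ we have $f(T)=\wass_T(\mu,\nu)=\mathbf{1}^\top\embedding[T]$. This holds because, by \cref{lem:transport-on-trees-formula}, $\embedding[T]$ is the unique optimal flow on $T$ and it is supported on $T$. Hence $f$ factors through the embedding $\embedding$, and questions about level sets of $f$ on $\stategraph$ become questions about vertices of $\flowpoly(\mu,\nu)$ together with the linear functional $\mathbf{1}^\top$.

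Fix any $T_0\in X$ and set $K_0=\embedding[T_0]$. By \cref{prop:vertices-of-polytope}(i), $K_0$ is a vertex of $\flowpoly(\mu,\nu)$, and $\mathbf{1}^\top K_0=c>\wass(\mu,\nu)$.
\begin{enumerate}[label=(\alph*)]
\item \cref{lem:edge-descent-polytope} supplies a vertex $K_1$, adjacent to $K_0$ in the $1$-skeleton, with $\mathbf{1}^\top K_1<c$.
\item \cref{lem:adjacency-in-K} then gives a tree $T_1\sim T_0$ in $\stategraph$ with $\embedding[T_1]=K_1$, so $f(T_1)<c$.
\item By condition (ii) of \cref{def:basin}, every tree of $X$ has value exactly $c$, so $T_1\notin X$.
\item Since $T_1$ is adjacent to $T_0\in X$, we get $T_1\in\partial X$.
\item Condition (iii) then requires $f(T_1)>c$, contradicting $f(T_1)<c$.
\end{enumerate}
Therefore $c\le \wass(\mu,\nu)$. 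The reverse inequality $c=f(T_0)\ge\wass_G(\mu,\nu)$ follows from \cref{eq:tree-decomposition}, which gives $c=\wass_G(\mu,\nu)$.

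This argument never uses the connectivity condition (i) of the basin, only that $X$ is nonempty. That is consistent with \cref{thm:isolated-vertex-minimizers} being the special case $X=\{T_0\}$, and the same argument proves that case verbatim.

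The step needing the most care is the identity $f(T)=\mathbf{1}^\top\embedding[T]$ for every tree $T$, including the case where $\embedding[T]$ is shared by a whole fiber. The rest is a direct composition of \cref{lem:edge-descent-polytope} and \cref{lem:adjacency-in-K}. To justify the identity, apply \cref{lem:flow-formulation} on the graph $T$: the cost $\wass_T$ equals the total mass of the optimal flow, and that flow, viewed in $\mathbb{R}^{E'}$ with zeros off $T$, is exactly $\embedding[T]$ by \cref{def:canonical-embedding}. Once this is in place, \cref{lem:adjacency-in-K} must be applied starting from the specific tree $T_0\in X$ rather than some other tree in the fiber of $K_0$. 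The lemma permits this, since it holds for any $T$ with $\embedding[T]=K_0$.
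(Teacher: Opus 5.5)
Your argument is the paper's own proof, and it inherits a genuine gap at step (b). The gap is precisely the point you flag as harmless: \cref{lem:adjacency-in-K} does not hold for \emph{every} tree in the fiber of $K_0$.

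Here is a counterexample. Let $G$ be the $6$-cycle $v_0v_1v_2v_3v_4v_5$ plus the chord $v_0v_3$, and take $\mu=\tfrac12(\delta_{v_1}+\delta_{v_4})$, $\nu=\tfrac12(\delta_{v_2}+\delta_{v_5})$, so $\wass_G(\mu,\nu)=1$. The tree $T_0=\{v_0v_1,v_0v_3,v_0v_5,v_2v_3,v_3v_4\}$ carries $\tfrac12$ along $v_1\to v_0\to v_5$ and along $v_4\to v_3\to v_2$, and nothing on the chord, so $f(T_0)=2$. Its six neighbors are obtained by adding $v_1v_2$ or $v_4v_5$ and deleting one of the three tree edges on the resulting $4$-cycle. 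A direct computation shows that all six also have cost $2$.

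So \cref{lem:edge-descent-polytope} does produce a vertex adjacent to $\embedding[T_0]$ with smaller cost, but that vertex has no preimage adjacent to $T_0$. The proof of \cref{lem:adjacency-in-K} breaks in the case $f\in C$, where it tacitly assumes $C\subseteq T\cup\{e\}$. Here $C$ is the outer $6$-cycle and contains two edges of $F_2$ outside $T_0$, so no single exchange reaches the fiber of $K_2$.

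Its unique-cycle step also fails on this example. $\flowpoly(\mu,\nu)$ is a tetrahedron, and the two cost-$2$ vertices supported on $\{v_0v_1,v_0v_3,v_2v_3,v_4v_5\}$ and $\{v_1v_2,v_0v_3,v_0v_5,v_3v_4\}$ are adjacent, yet the union of their supports has two cycles.

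A useful diagnostic: your steps (a)--(e) use only $T_0\in X$ and the values of $f$ at neighbors of $T_0$. If they were valid, they would show that every nonoptimal tree has a strictly better neighbor, which this example refutes. The theorem itself survives here. The tree $T_0\setminus\{v_0v_3\}\cup\{v_1v_2\}$ has the same flow (cost $2$), and adding $v_4v_5$ and deleting $v_0v_1$ from it gives a tree of cost $1$.

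What is missing is a step that moves inside $X$ at constant level before descending. One clean repair uses network simplex on the LP of \cref{lem:flow-formulation}:
\begin{itemize}
\item A feasible basis is a spanning tree $S$ whose arcs are oriented along its flow (zero-flow edges oriented arbitrarily), and its basic solution is $\embedding[S]$.
\item The reverse of a basic arc has reduced cost $2$, so it never enters. Hence every pivot is an exchange $S\mapsto S\cup\{e\}\setminus\{e'\}$, i.e.\ a step in $\stategraph$.
\item Start from $T_0$. Since $K_0$ is not optimal and the objective is bounded below, Bland's rule performs finitely many degenerate pivots, which stay in $\embedding^{-1}[K_0]$ and hence at level $c$. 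It then performs a nondegenerate pivot $S_0\mapsto S_1$ with $f(S_1)<c$.
\item By (ii) and (iii), each tree on the degenerate stretch is adjacent to a tree of $X$ and has value $c$. So it is not in $\partial X$ and must lie in $X$.
\item Thus $S_0\in X$, $S_1\in\partial X$ and $f(S_1)<c$, contradicting (iii).
\end{itemize}
Alternatively, \cref{prop:connected-fibers} together with (ii)--(iii) shows $X\supseteq\embedding^{-1}[K_0]$. You then only need \emph{some} tree of that fiber with a strictly better neighbor, not every tree.
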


\begin{proof}[Proof of~\cref{thm:no-isolated-basins}]
    Suppose to the contrary that $c>\wass(\mu,\nu)$. Fix any $T\in X$ and set $K_0:=\embedding[T]$. Then
        \begin{align*}
            \mathbf{1}^\top K_0 = f(T)=c>\wass(\mu,\nu).
        \end{align*}
    By~\cref{lem:edge-descent-polytope}, there exists a vertex $K_1\in\flowpoly(\mu,\nu)$ adjacent to $K_0$ with
        \begin{align*}
            \mathbf{1}^\top K_1 < \mathbf{1}^\top K_0 = c.
        \end{align*}
    Applying~\cref{lem:adjacency-in-K} to the edge $\{K_0,K_1\}$, we obtain $S\in\setoftrees$ such that $S\sim T$ in $\stategraph$ and $\embedding[S]=K_1$. Hence
        \begin{align*}
            f(S)=\mathbf{1}^\top K_1 < c.
        \end{align*}
    But $S\sim T$ with $T\in X$, so either $S\in X$ or $S\in\partial X$. If $S\in X$, then by the definition of basin we must have $f(S)=c$, leading to a contradiction. If $S\in\partial X$, then again by definition we must have $f(S)>c$, similarly leading to a contradiction. Therefore $c\le \wass(\mu,\nu)$.

    Since $\wass(\mu,\nu)$ is the global minimum of $f$, we always have $c\ge \wass(\mu,\nu)$ for any nonempty level set $\{T: f(T)=c\}$. Hence $c=\wass(\mu,\nu)$.
\end{proof}

\section{Convergence of steepest descent on the state graph}\label{sec:steepest-descent}

In this section we analyze a simple steepest descent algorithm for minimizing the transport function $f(T) = \wass_T(\mu,\nu)$ on the spanning tree state graph $\stategraph$. Before defining the algorithm, we state and prove a technical lemma.

\begin{lemma}\label{lem:descent-step}
    Let $G=(V,E)$ be a connected graph and let $\mu,\nu\in\prob(V)$ be fixed. Let $T\in\setoftrees$ be fixed satisfying
        \begin{align*}
            \wass_T(\mu,\nu) > \wass(\mu,\nu).
        \end{align*}
    Then there exists a tree $T'\in\setoftrees$ with $T'\sim T$ in $\stategraph$ such that
        \begin{align*}
            \wass_{T'}(\mu,\nu) < \wass_T(\mu,\nu).
        \end{align*}
\end{lemma}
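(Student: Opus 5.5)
This lemma follows by combining \cref{lem:edge-descent-polytope} with \cref{lem:adjacency-in-K}, in the same way as the proof of \cref{thm:no-isolated-basins}. The plan is to pass from the tree $T$ to its canonical embedding, take one descent step along an edge of the tree polytope, and lift that step back to a single edge exchange in $\stategraph$.

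First, set $K_0:=\embedding[T]$. By \cref{prop:vertices-of-polytope}(i), $K_0$ is a vertex of $\flowpoly(\mu,\nu)$. By \cref{lem:transport-on-trees-formula} and \cref{def:canonical-embedding}, $\embedding[T]$ is exactly the optimal flow on $T$, so
\[
\mathbf{1}^\top K_0=\wass_T(\mu,\nu)>\wass(\mu,\nu).
\]
More generally, $\mathbf{1}^\top\embedding[S]=\wass_S(\mu,\nu)$ for every $S\in\setoftrees$.

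Next, apply \cref{lem:edge-descent-polytope} to $K_0$. This gives a vertex $K_1$ of $\flowpoly(\mu,\nu)$ with $\{K_0,K_1\}$ an edge of the polytope and $\mathbf{1}^\top K_1<\mathbf{1}^\top K_0$. Then apply \cref{lem:adjacency-in-K} to this edge, with $T$ as the given preimage of $K_0$. It produces a tree $T'\in\setoftrees$ with $T'\sim T$ in $\stategraph$ and $\embedding[T']=K_1$. Therefore
\[
\wass_{T'}(\mu,\nu)=\mathbf{1}^\top K_1<\mathbf{1}^\top K_0=\wass_T(\mu,\nu),
\]
which is the claim.

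No step here should be a real obstacle, since the substantive work lies in the two cited lemmas. The one point to check is the identity $\mathbf{1}^\top\embedding[S]=\wass_S(\mu,\nu)$. It holds because \cref{lem:flow-formulation}, applied to the graph $S$, expresses $\wass_S$ as the minimum of $\sum_e J(e)$ over feasible flows on $S$, and \cref{lem:transport-on-trees-formula} identifies $\embedding[S]$, restricted to the oriented edges of $S$, as the unique minimizer. Padding that flow with zeros off $S$ does not change the sum.
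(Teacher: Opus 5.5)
Your proposal is correct and follows the paper's proof step for step: embed $T$ as the vertex $\embedding[T]$ of $\flowpoly(\mu,\nu)$, take a descending polytope edge using \cref{lem:edge-descent-polytope}, and lift it to an adjacent tree with \cref{lem:adjacency-in-K}. Your justification of $\mathbf{1}^\top\embedding[S]=\wass_S(\mu,\nu)$ through \cref{lem:flow-formulation} and \cref{lem:transport-on-trees-formula} simply spells out the identity that the paper asserts holds ``by construction.''
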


\begin{proof}
    Set $K_0:=\embedding[T]$. By~\cref{prop:vertices-of-polytope}, $K_0$ is a vertex of $\flowpoly(\mu,\nu)$. By assumption it follows that
        \begin{align*}
            \mathbf{1}^{\top}K_0 > \wass(\mu,\nu).
        \end{align*}
    Applying~\cref{lem:edge-descent-polytope}, there exists a vertex $K_1$ adjacent to $K_0$ in the $1$-skeleton of $\flowpoly(\mu,\nu)$ such that
        \begin{align*}
            \mathbf{1}^{\top}K_1 < \mathbf{1}^{\top}K_0.
        \end{align*}
    Since $K_0=\embedding[T]$ and $\{K_0,K_1\}$ is an edge of $\flowpoly(\mu,\nu)$, \cref{lem:adjacency-in-K} yields a spanning tree $T'\in\setoftrees$ such that $T'\sim T$ and $\embedding[T']=K_1$. By construction, $\mathbf{1}^{\top}\embedding[S]=\wass_S(\mu,\nu)$ for every spanning tree $S$, so it follows that $\wass_{T'}(\mu,\nu)=\mathbf{1}^{\top}K_1<\mathbf{1}^{\top}K_0=\wass_T(\mu,\nu)$. This proves the lemma.
\end{proof}

Using~\cref{lem:descent-step}, the steepest descent algorithm on $\stategraph$ proceeds as follows. Starting from an initial tree $T_0\in\setoftrees$, if no neighbors of $T_0$ have strictly lower cost, then $T_0$ is optimal and we are done. Otherwise there exists a neighbor $T_1\sim T_0$ with $\wass_{T_1}(\mu,\nu) < \wass_{T_0}(\mu,\nu)$. We repeat this process by checking the neighbors of $T_1$. Since the state graph is finite, this process terminates in finitely many steps with an optimal tree. 

\begin{theorem}\label{thm:steepest-descent-convergence}
    Let $G=(V,E)$ be a connected graph and let $\mu,\nu\in\prob(V)$ be fixed with $\mu\ne \nu$. Let $\delta = \delta(\mu,\nu) > 0$ denote the smallest positive improvement gap between two trees with differing transport cost. Then for any initial tree $T\in\setoftrees$ there exists a path $T_0=T,T_1,\dotsc, T_{m}$ in $\stategraph$ satisfying:
        \begin{enumerate}
            \item All steps lead to strict improvement of the transport cost, i.e., $\wass_{T_{i+1}}(\mu,\nu) < \wass_{T_i}(\mu,\nu)$ for each $0\le i \le m-1$;
            \item The length of the path satisfies $m\leq \frac{n-1}{\delta}$;
            \item The path terminates with optimal cost, i.e., $\wass_{T_m}(\mu,\nu) = \wass(\mu,\nu)$.
        \end{enumerate}
\end{theorem}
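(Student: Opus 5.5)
The path is built greedily from \cref{lem:descent-step}, and the length bound comes from comparing the total decrease in cost with the minimum decrease per step. Set $T_0=T$. If $\wass_{T_i}(\mu,\nu)>\wass(\mu,\nu)$, then \cref{lem:descent-step} gives a neighbour $T_{i+1}\sim T_i$ with $\wass_{T_{i+1}}(\mu,\nu)<\wass_{T_i}(\mu,\nu)$. To make the path one of steepest descent, we take $T_{i+1}$ to be a neighbour of minimal cost; this neighbour is still strictly better than $T_i$. The costs strictly decrease and $\setoftrees$ is finite, so the sequence cannot repeat a tree and must stop. By \cref{lem:descent-step}, it can only stop at a tree $T_m$ with $\wass_{T_m}(\mu,\nu)=\wass(\mu,\nu)$. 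This establishes properties 1 and 3.

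For property 2, each step decreases the cost by a positive amount. Any such difference between two trees of differing cost is at least $\delta$ by the definition of $\delta$. Summing over the steps gives
\begin{align*}
m\,\delta \le \wass_{T_0}(\mu,\nu)-\wass_{T_m}(\mu,\nu)\le \wass_{T_0}(\mu,\nu).
\end{align*}
It therefore suffices to bound the cost of an arbitrary spanning tree by $n-1$, where $n=|V|$.

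This bound follows from the formula in \cref{lem:transport-on-trees-formula}. For each edge $\{x,y\}\in T$, exactly one orientation carries the flow $\bigl|\sum_{z\in S_x}(\mu(z)-\nu(z))\bigr|$. This quantity is at most $\max\{\mu(S_x),\nu(S_x)\}\le 1$, because $\mu$ and $\nu$ are probability vectors. Since $T$ has $n-1$ edges, $\wass_T(\mu,\nu)=\mathbf{1}^\top\embedding[T]\le n-1$. Combining this with the displayed inequality gives $m\le (n-1)/\delta$.

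The main point needing care is the meaning of $\delta$: it must be a lower bound on every strict improvement, including improvements between trees not adjacent to each other. Taking $\delta$ to be the minimum over all pairs of trees with different costs of the cost difference guarantees this. Positivity of $\delta$ needs at least two distinct cost values; if all trees have the same cost, then $m=0$ and the bound holds trivially.

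For \cref{thm:steepest-descent-empirical}, where $\mu-\nu\in\delta\mathbb{Z}^V$, every flow value in \cref{eq:tree-flow-formula} lies in $\delta\mathbb{Z}$. Hence all tree costs lie in $\delta\mathbb{Z}$, so every strict improvement is at least $\delta$ and the same argument applies.
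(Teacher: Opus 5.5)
Your proposal is correct and is essentially the paper's proof: iterate \cref{lem:descent-step} to build a strictly decreasing path that can only stop at an optimal tree, then bound the number of steps by the total cost decrease divided by $\delta$, using $\wass_{T}(\mu,\nu)\le n-1$ because each of the $n-1$ tree edges carries flow at most $1$. Your extra choices, namely picking a minimal-cost neighbour at each step and bounding the decrease by $\wass_{T_0}(\mu,\nu)$ rather than $\wass_{T_0}(\mu,\nu)-\wass(\mu,\nu)$, are harmless refinements that do not change the argument.
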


\begin{proof}
    If $T$ is optimal or if $\delta = +\infty$, the claim holds immediately. We may assume $\wass_{T}(\mu,\nu) > \wass(\mu,\nu)$. 
    
    By~\cref{lem:descent-step}, there exists a neighbor $T_1\sim T$ in $\stategraph$ with $\wass_{T_1}(\mu,\nu) < \wass_{T}(\mu,\nu)$. If $\wass_{T_1}(\mu,\nu)=\wass(\mu,\nu)$, we are done; otherwise at stage $i$, having constructed $T_i$ with $\wass_{T_i}(\mu,\nu) > \wass(\mu,\nu)$, we apply the lemma to obtain $T_{i+1}\sim T_i$ with $\wass_{T_{i+1}}(\mu,\nu) < \wass_{T_i}(\mu,\nu)$.

    This greedy descent process terminates in finitely many steps since each step yields a strict improvement of at least $\delta > 0$. Specifically, starting from $\wass_{T}(\mu,\nu)$, each step decreases the cost by at least $\delta$, so the number of steps is bounded by
        \begin{align*}
            m \leq \frac{\wass_{T}(\mu,\nu) - \wass(\mu,\nu)}{\delta} \leq \frac{\max_{T'\in\setoftrees} \wass_{T'}(\mu,\nu) - \wass(\mu,\nu)}{\delta} \leq \frac{n-1}{\delta},
        \end{align*}
    where the last inequality follows because every spanning tree on $n$ vertices has $n-1$ edges and for each oriented edge $(x,y)$ it follows that the optimal flow $J_{T}$ as in~\cref{eq:tree-flow-formula} satisfies $J_T(x,y)\le 1$. Therefore $\wass_{T'}(\mu,\nu)\le n-1$ for each $T'\in\setoftrees$. By construction, the path $T_0, T_1, \dotsc, T_m$ satisfies all three required properties.
\end{proof}

The minimum improvement gap $\delta$ may be arbitrarily small in general, but the following result gives a nontrivial lower bound in the case where $\mu$ and $\nu$ are discrete measures, which arise in various applications.

\begin{lemma}\label{lem:rational-measures-gap}
    Let $G=(V,E)$ be a connected graph and let $\mu,\nu\in\prob(V)$ be fixed with $\mu\ne \nu$. Assume that for each $x\in V$ it holds $\mu(x),\nu(x) \in \alpha\mathbb{Z}$ for some $\alpha > 0$. Then $\delta(\mu,\nu) \geq \alpha$.
\end{lemma}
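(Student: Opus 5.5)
The plan is to show that every value $\wass_T(\mu,\nu)$, $T\in\setoftrees$, lies in the lattice $\alpha\mathbb{Z}$. Once this holds, the difference of two distinct transport costs is a nonzero element of $\alpha\mathbb{Z}$, so its absolute value is at least $\alpha$. Taking the minimum over all pairs of trees with differing cost then gives $\delta(\mu,\nu)\ge\alpha$. If no two trees have differing cost, $\delta=+\infty$ by convention and the bound holds trivially.

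To see that $\wass_T(\mu,\nu)\in\alpha\mathbb{Z}$, I would use the explicit formula of \cref{lem:transport-on-trees-formula}, equivalently the canonical embedding of \cref{def:canonical-embedding}. Since $\mathbf{1}^\top\embedding[T]=\wass_T(\mu,\nu)$, we have
\begin{align*}
\wass_T(\mu,\nu)=\sum_{(x,y)\in T'} \max\Big\{\sum_{z\in S_x}(\mu(z)-\nu(z)),0\Big\}.
\end{align*}
For each $z$, $\mu(z)-\nu(z)\in\alpha\mathbb{Z}$, because $\alpha\mathbb{Z}$ is closed under subtraction. Each inner sum is a finite sum of such terms and therefore also lies in $\alpha\mathbb{Z}$. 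The maximum with $0$ preserves membership, since $0\in\alpha\mathbb{Z}$. Finally, the outer sum over the finitely many oriented edges again lies in $\alpha\mathbb{Z}$.

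Equivalently, one can argue through \cref{rmk:canonical-embedding}. Solving $\widetilde B X=\mu-\nu$ by repeatedly peeling off leaves shows that each entry of $X$ is a signed partial sum of the values $\mu(z)-\nu(z)$. It is therefore in $\alpha\mathbb{Z}$, and the cost $\sum|X(e)|$ is as well.

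There is no real obstacle here; the one point needing care is the formula for the tree cost. Note also that the hypothesis actually only requires $\mu-\nu\in\alpha\mathbb{Z}^V$, which matches the assumption in \cref{thm:steepest-descent-empirical}. Combined with \cref{thm:steepest-descent-convergence}, this yields the step bound $\frac{n-1}{\alpha}$.
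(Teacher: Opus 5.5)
Your proposal is correct and matches the paper's argument: the paper also uses \cref{lem:transport-on-trees-formula} to note that each $\wass_T(\mu,\nu)$ is a nonnegative integer multiple of $\alpha$, so any two distinct tree costs differ by at least $\alpha$. Your two side remarks are also accurate: the convention $\delta=+\infty$ covers the case where all tree costs coincide, and the hypothesis $\mu-\nu\in\alpha\mathbb{Z}^V$ is all that is actually needed.
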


\Cref{lem:rational-measures-gap} is a simple consequence of~\cref{lem:transport-on-trees-formula}: for each spanning tree $T$ of $G$, $\wass_T(\mu,\nu)$ is a nonnegative integer multiple of $\alpha$, and thus any two distinct tree transport values differ by at least $\alpha$. 

%
%

\Cref{thm:steepest-descent-empirical} now follows immediately from~\cref{lem:rational-measures-gap} and~\cref{thm:steepest-descent-convergence}.

\section{Ollivier--Ricci curvature}\label{sec:ollivier-ricci-curvature}

Ollivier--Ricci curvature is a transportation-based notion of curvature that quantitatively captures the local geometry of a graph. Roughly speaking, on a graph $G=(V,E)$, if the one-step neighborhoods of two vertices are closer to each other than the vertices themselves, the curvature is positive, while if they are farther apart, the curvature is negative~\cite{ollivier2009ricci}. This quantity has proved useful in a range of applications in graph and network analysis~\cite{tian2025curvature}. Of interest is the problem of computing Ollivier--Ricci curvature in graphs at scale.

In the present setting, our steepest descent framework gives an efficient algorithmic approach to computing the curvature of a graph: the computation at each edge is reduced to a single transportation problem between two explicit probability measures, and the results of this section show that, with an appropriate initial spanning tree, the steepest descent procedure can be used to compute the curvature of a graph in polynomial time.

    We begin by recalling the definition of Ollivier--Ricci curvature in a graph. Write $N(x):=\{z\in V : z\sim x\}$ for the neighbor set of a vertex $x$. For each $x\in V$, define the probability measure $m_x\in\prob(V)$ by
        \begin{align*}
            m_x(z) &= \begin{cases}
                \frac{1}{\deg(x)}, & z\in N(x),\\
                0, & \text{otherwise}.
            \end{cases}
        \end{align*}
    In other words, $m_x$ is the one-step distribution of the simple random walk started at $x$. For distinct vertices $x,y\in V$, the Ollivier--Ricci curvature is defined by
        \begin{align*}
            \kappa(x,y) &= 1-\frac{\wass(m_x,m_y)}{d_G(x,y)}.
        \end{align*}
    In particular,
        \begin{align*}
            \kappa(x,y) &= 1-\wass(m_x,m_y),\text{ for each }\{x,y\}\in E.
        \end{align*}
    In many applications one is interested in computing the curvature values of all edges in a graph.
    
    To compute $\kappa(x,y)$ efficiently, it is better to start from a spanning tree tailored to the edge $\{x,y\}$ rather than from an arbitrary tree. Let $H_{xy}$ be the tree on the local vertex set $\{x\}\cup N(x)\cup N(y)$ with edge set
        \begin{align*}
            E(H_{xy}) &= \bigl\{\{x,u\}: u\in N(x)\bigr\}
            \cup \bigl\{\{y,v\}: v\in N(y)\setminus(\{x\}\cup N(x))\bigr\}.
        \end{align*}
    Extend $H_{xy}$ arbitrarily to a spanning tree $T_{xy}\in\setoftrees$. In $T_{xy}$, every vertex in the support of $m_x$ is adjacent to $x$, and every vertex in the support of $m_y$ is either adjacent to $x$ or adjacent to $y$. Hence for any $u\in\supp(m_x)$ and $v\in\supp(m_y)$ we have $d_{T_{xy}}(u,v)\le 3$, and therefore
        \begin{align*}
            \wass_{T_{xy}}(m_x,m_y) &\le 3.
        \end{align*}
    We then run steepest descent on the function $T\mapsto\wass_T(m_x,m_y)$ starting from the tree $T_{xy}$. The output is a tree $T_\star$ satisfying
        \begin{align*}
            \wass_{T_\star}(m_x,m_y) &= \wass(m_x,m_y),
        \end{align*}
    and hence
        \begin{align*}
            \kappa(x,y) &= 1-\wass_{T_\star}(m_x,m_y).
        \end{align*}

    This initialization gives a strictly better convergence estimate. Every entry of $m_x-m_y$ is an integer multiple of
        \begin{align*}
            \alpha_{xy} &:= \frac{1}{\operatorname{lcm}(\deg(x),\deg(y))}.
        \end{align*}
    Therefore~\cref{lem:rational-measures-gap} gives $\delta(m_x,m_y)\ge \alpha_{xy}$. Moreover, the proof of~\cref{thm:steepest-descent-convergence} shows that the number of descent steps from an initial tree $T_0$ is bounded by
        \begin{align*}
            \frac{\wass_{T_0}(m_x,m_y)-\wass(m_x,m_y)}{\delta(m_x,m_y)}
            \le \frac{\wass_{T_0}(m_x,m_y)}{\delta(m_x,m_y)}.
        \end{align*}
    Choosing $T_0=T_{xy}$ and using $\wass_{T_{xy}}(m_x,m_y)\le 3$, we obtain
        \begin{align*}
            m_{xy} &\le \frac{3}{\delta(m_x,m_y)}
            \le 3\operatorname{lcm}(\deg(x),\deg(y))
            \le 3\Delta^2,
        \end{align*}
    where $m_{xy}$ denotes the number of steps required to compute $\wass(m_x,m_y)$ starting from $T_{xy}$, and $\Delta$ denotes the maximum degree of $G$. Thus a single edge curvature is obtained in at most $3\Delta^2$ descent moves, and the full list of edge curvature values is obtained in at most
        \begin{align*}
            3\sum_{\{x,y\}\in E}\operatorname{lcm}(\deg(x),\deg(y))
            \le 3|E|\Delta^2
        \end{align*}
    descent moves. We remark that if $G$ is $d$-regular then this bound can be improved to $3|E|d$. 

    One can search for a better neighboring tree more efficiently than by recomputing the transport cost on every adjacent tree from scratch.
    Choose arbitrary orientations of the edges of the current tree $T$. For each oriented edge $g=(u,v)$ of $T$, let
        \begin{align*}
            s_T(g) &:= \sum_{z\in S_u} \bigl(m_x(z)-m_y(z)\bigr),
        \end{align*}
    where $S_u$ denotes the connected component of $T\setminus\{\{u,v\}\}$ containing $u$. Then~\cref{lem:transport-on-trees-formula} gives $\wass_T(m_x,m_y)=\sum_{g\in E(T)} |s_T(g)|$, where $s_T(g)$ is computed using the chosen orientation of $g$. 
    These values are computed for all tree edges in linear time. Now fix a non-tree edge $f\in E\setminus T$, and let $e_1,\dots,e_k$ be the edges on the unique $T$-path joining the endpoints of $f$. Write $s_j:=s_T(e_j)$ and set $s_0:=0$. If we form the neighboring tree $T_i := T\setminus\{e_i\}\cup\{f\}$, then only the edges on the fundamental cycle change, and
        \begin{align*}
            \wass_{T_i}(m_x,m_y)
            = \wass_T(m_x,m_y) - \sum_{j=1}^k |s_j| + \sum_{j=0}^k |s_j-s_i|.
        \end{align*}
    Hence, for the fixed edge $f$, the best choice of $e_i$ is obtained by taking $s_i$ to be a median of the numbers $0,s_1,\dots,s_k$. Thus one can find the best exchange associated with $f$ in time linear in the length of its fundamental cycle, rather than by checking every deletion and recomputing the full transport cost each time.

    Summing over all non-tree edges gives a per-step search cost of
        \begin{align*}
            O\Bigl(\sum_{f\in E\setminus T} |C_T(f)|\Bigr) = O\bigl(|E||V|\bigr),
        \end{align*}
    where $C_T(f)$ denotes the fundamental cycle of $f$ with respect to $T$, and the last equality follows since each fundamental cycle has length at most $|V|$. Consequently, the initialization above yields the all-edge runtime estimate
        \begin{align*}
            O\bigl(|E|^2|V|\Delta^2\bigr),
        \end{align*}
    up to the lower-order cost of constructing the initial trees $T_{xy}$. If either $|E|$ or $\Delta$ is relatively small, our algorithm remains competitive with comparable approaches, e.g., the Hungarian algorithm, which would require $O(|E||V|^3)$ time to accomplish the same task (see, e.g.,~\cite{tomizawa1971techniques}).

\end{document}